\newtheorem{theorem}{Theorem}[section]
\newtheorem{lemma}[theorem]{Lemma}
\newtheorem{definition}[theorem]{Definition}
\newtheorem{corollary}[theorem]{Corollary}
\newtheorem{remark}[theorem]{Remark}
\numberwithin{equation}{section}
\title[]{SMOOTHNESS  OF  CLASS $C^2$ OF  NONAUTONOMOUS  LINEARIZATION without spectral conditions}
\author[]{N. Jara}
\address{Universidad de Chile, Departamento de Matem\'aticas. Casilla 653, Santiago, Chile}
\email{nestor.jara@ug.uchile.cl}
\subjclass[2010]{37C60, 37D25}
\keywords{Nonautonomus hyperbolicity, Nonautonomous differential equation, Smooth linearization}
\thanks{This research has been partially supported by ANID, Beca Nacional de Magister 22200774.}
\date{\today}
\begin{document}

\maketitle

\begin{abstract}
We prove that smoothness of nonautonomous linearization is of class $C^2.$ Our approach admits the existence of stable and unstable manifolds determined by a family of nonautonomous hyperbolicities.  Moreover, our goal is reached without spectral conditions.
\end{abstract}

\section{Introduction}

The linearization problem was initially regarded locally by P. Hartman \cite[Theorem I]{Hartman1} in the context of autonomous equations, later a specific type of those equations, namely a linear and a quasi-linear system, were studied by C. Pugh \cite{Pugh} in order to achieve a global homeomorphism between the flows of the systems. Next this result was adapted to the nonautonomous case by K. J. Palmer \cite{Palmer}, who implemented the concepts of exponential dichotomy in order to obtain its linearization problem. 


The homoeomorphisms with its properties that K.J. Palmer obtained are known as the concept of topological equivalence which is useful to describe the asymptotic behaviour of solutions, even when they may be unknown, for example, to find locally or global attractors, or more generally, stable and unstable manifolds. Nevertheless, there are more dynamical properties that cannot be described with purely topological tools and require a study of the the differentiability of such homeomorphisms.

The study of the smoothness of homomorphisms has been a problem with a vast analysis in both the autonomous and nonautonomous contexts:  In the autonomous framework, S. Sternberg \cite{Sternberg1,Sternberg2} proved that dynamical system given by $C^r-$diffeomorphism can be $C^k-$linearized locally around hyperbolic fixed points that satisfy a nonresonant condition on its spectrum. This results were later improved by G. R. Belicki\u{\i} \cite{Belitski1, Belitski2}, still locally but giving more explicit conditions for the derivatives of the conjugation. S. van Strein \cite{Strien} was the first to obtain a similar result without imposing resonance conditions, but his proof turned out to be wrong, as stated by V. Rayskin \cite{Rayskin}. 

As far as the author has been able to ascertain of studying the differentiability of the topological conjugacy between a linear and a quasilinear nonautonomous system was first regarded in  the work \cite{Castaneda4} by \'A. Casta\~neda and G. Robledo, who ensured the homeomorphisms are $C^2$ preserving orientation diffeomorphisms if the linear system are exponentially  asymptotically stable on $\mathbb{R}$ and verified some conditions. This result was later improved in \cite{Castaneda5} to include more general decay ratios other than exponential but in $\mathbb{R}^+$. In these results authors were able to construct a global diffeomorphism in the same fashion as K.J. Palmer.

L. V. Cuong \textit{et. al.} \cite{Cuong} obtained a smooth linearization in a similar way as S. Sternberg when the linear part admits an exponential dichotomy on whole real line. Recently, D. Dragi\v{c}evi\'c \textit{et. al.} \cite{Dragicevic2} have proved Strein's statement to be true, and further extended it to the nonautonomous case while also improving Cuong's result, under the assumption that the linear system satisfy a nonuniform  hyperbolicity and other technical conditions, which do not include nonresonance conditions but impose spectral bounds.

In this paper we develop ideas hold many similarities with \'A. Casta\~neda, P. Monz\'on and G. Robledo \cite{CMR}, since both study the topological equivalence of a linear and a quasi-linear systems on the positive half real line. We improve their result in two senses: first we present a wider family of dichotomies accepted for the linear system, including both the exponential and the nonuniform exponential under some technical conditions that will be expressed later. Secondly, and  we allow the existence of nonempty unstable manifolds for our linear system. We do not rely on of spectral bounds or nonresonant conditions, but only in the interlacing of the properties of nonautonomous hyperbolicity of the linear part, and boundedness and lipschitzness of the nonlinearities.

However, our work does not regard the continuity of the homeomorphism of topological equivalence as a function of two variables, a property that \'A. Casta\~neda \textit{et al.}  called  continuous topological equivalence \cite[Definition 1.4]{CMR}. We neither obtain a strong topological equivalence, a characterization introduced by J. Shi \cite[Definition 2.4]{Shi}, which corresponds to the uniform continuity of the homeomorphism and was also achieved on \cite[Theorem 2.1]{CMR}. Nevertheless, as a byproduct of the existence of an uniform bound for the derivative of the homeomorphism at any fixed time, we obtain a weaker version of this.

In the third section of this work we present and prove the topological equivalence between our systems, admitting a wide family of dichotomies, similar to those described by P. Gonzalez \textit{et. al.} \cite{Castaneda3} for the discrete framework, work from which we borrow important tools, adapted for the continuous framework. 

In the fourth section we prove under technical conditions that the homeomorphism that we previously constructed is a $C^1-$diffeomorphism and give an explicit form for its derivative. Furthermore, we give examples of concrete dichotomies that satisfy our conditions. Finally, in the fifth section we give enough conditions to ensure the second class of differentiability for the topological equivalence, as well as an explicit way to achieve those conditions with an specific dichotomy. 

\section{Preliminaries}
We study the systems 
\begin{equation}\label{89}
    \dot{x}=A(t)x,
\end{equation}
and
\begin{equation}\label{90}
    \dot{y}=A(t)y+f(t,y).
\end{equation}

Denote $t\mapsto x(t,\tau,\xi)$ y $t\to y(t,\tau,\eta)$ to the solutions of (\ref{89}) and (\ref{90}) that pass through $\xi$ and $\eta$ respectively on $t=\tau$. We also denote $X(t,s)$ the transition matrix of (\ref{89}) such that for $t=s$ is the identity. Moreover, $A:\mathbb{R}^+\to \mathcal{M}_{d}(\mathbb{R})$ is continuous, non singular (\textit{i.e.} has invertible images) and uniformly bounded, that is to say, there exists $M> 1$ such that
    \begin{equation*}
    \max\left\{ \sup_{s \in \mathbb{R}^+}\norm{A(s)}, \sup_{s\in \mathbb{R}^+}\norm{A^{-1}(s)}\right\}=M.
    \end{equation*}
    
    The function $f:\mathbb{R}^+\times \mathbb{R}^d\to \mathbb{R}^d$  is such that there exist sequences $\mathfrak{u}, \mathfrak{v}:\mathbb{R}^+\to \mathbb{R}^+$ that satisfy that for every $s\in \mathbb{R}^+$ and every pair $(y,\Tilde{y})\in \mathbb{R}^d\times \mathbb{R}^d$
    $$|f(s,y)-f(s,\Tilde{y})|\leq \mathfrak{v}(s)|y-\Tilde{y}|\text{ ; }|f(s,y)|\leq \mathfrak{u}(s).$$ 

Furthermore, allow us to consider the following hypothesi:

\begin{itemize}
    \item [\textbf{(c1)}]  (\ref{89}) admits a non uniform dichotomy, \textit{i.e.} there are two invariant complementary projectors $P(\cdot)$  and $Q(\cdot)$ such that $P(t)+Q(t)=I$ for every $t\geq 0$, a continuous function $K:[0,+\infty[\to [0,\infty[$ and a decreasing $C^1$ function $h:[0,\infty[\to ]0,1]$, such that $h(0)=1$ and $\lim_{t\to \infty}h(t)=0$ and they satisfy
    $$\left\{ \begin{array}{lc}
            \norm{X(t,s)P(s)}\leq K(s)
            \left( \mathlarger{\frac{h(t)}{h(s)}}\right), &\forall t \geq s\geq 0 \\
            \\ \norm{X(t,s)Q(s)}\leq K(s)
            \left( \mathlarger{\frac{h(s)}{h(t)}}\right), &\forall 0\leq t\leq s.
             \end{array}
   \right.$$
    \item [\textbf{(c2)}] 
    $$\int_{0}^{t}\norm{X(t,s)P(s)}\mathfrak{u}(s)ds+\int_t^\infty \norm{X(t,s)Q(s)}\mathfrak{u}(s)ds\leq \mathfrak{p}<\infty\text{ , for every }t \in \mathbb{R}^+.$$
    
    \item [\textbf{(c3)}] 
    $$\int_{0}^{t}\norm{X(t,s)P(s)}\mathfrak{v}(s)ds+\int_t^\infty \norm{X(t,s)Q(s)}\mathfrak{v}(s)ds\leq \mathfrak{q}<1\text{ , for every }t \in \mathbb{R}^+.$$
    
    \item [\textbf{(c4)}] The map $u\mapsto f(t,u)$ and its derivatives respect to $u$ up to the order $r$ ($r\geq 1$) are continuous functions of $(t,u)\in \mathbb{R}^+\times \mathbb{R}^d$ and $\sup_{u\in \mathbb{R}^d}\norm{\frac{\partial f}{\partial u}(t,u)}<+\infty $ is bounded.
    
    \item [\textbf{(c5)}] For every fixed $\tau\in \mathbb{R}^+$, the functions  $K,h$ and $\mathfrak{v}$ satisfy
    $$\int_\tau^\infty K(s)h(s)\mathfrak{v}(s)\exp\left(\int_\tau^s \norm{A(r)}+\mathfrak{v}(r)dr\right)<+\infty.$$
\end{itemize}

\begin{remark}
The projectors $P$ and $Q$ have been called invariant for (\ref{89}), which means that for every $t,s\in \mathbb{R}^+$ they verify:
$$P(t)X(t,s)=X(t,s)P(s)\text{ and }Q(t)X(t,s)=X(t,s)Q(s).$$
\end{remark}

\begin{definition}
Green's operator associated to (\ref{89}) and the dichotomy \textbf{(c1)} is the matrix function $\mathcal{G}:\mathbb{R}^+\times\mathbb{R}^+\to \mathcal{M}_{d}(\mathbb{R})$ given by:
\begin{equation*}
    \mathcal{G}(t,s)= \left\{ \begin{array}{cc}
             X(t,s)P(s)  & \forall t \geq s\geq 0, \\
             \\ -X(t,s)Q(s) & \forall 0\leq t<s,
             \end{array}
   \right.
\end{equation*}
and it is easily deduced that
$$\frac{\partial\mathcal{G}}{\partial t}(t,s)=A(t)\mathcal{G}(t,s).$$
\end{definition}

\begin{remark}
With this notation, conditions \textbf{(c2)} and \textbf{(c3)} are rewritten as:
\begin{itemize}
    \item [\textbf{(c2)}] 
    $$\int_{0}^{\infty}\norm{\mathcal{G}(t,s)}\mathfrak{u}(s)ds\leq \mathfrak{p}<\infty\text{ , for every }t \in \mathbb{R}^+.$$
    
    \item [\textbf{(c3)}]
    $$\int_{0}^{\infty}\norm{\mathcal{G}(t,s)}\mathfrak{v}(s)ds\leq \mathfrak{q}<1\text{ , for every }t \in \mathbb{R}^+.$$
\end{itemize}
\end{remark}

\section{Topological equivalence}

In this section we prove that the linear system \textnormal{(\ref{89})} and \textnormal{(\ref{90})} are topologically equivalents on the positive half line. In order to state this result, we first recall the following definition.

\begin{definition}\label{208}
Let $J\subset \mathbb{R}$ an interval. Systems (\ref{89}) and (\ref{90}) are $J$-topologically equivalent if there is a function $H:J\times \mathbb{R}^d\to \mathbb{R}^d$ that satisfies
\begin{itemize}
    \item [i)] If $x(t)$ is solution of (\ref{89}), then $H[t,x(t)]$ is solution of (\ref{90}).
    \item [ii)] $H(t,u)-u$ is bounded on $J\times \mathbb{R}^d$.
    \item [iii)] For every fixed $\tau \in J$, the map $u\mapsto H(\tau,u)$ is an homeomophism of $\mathbb{R}^d$.
\end{itemize}
Moreover, the function $u\mapsto G(\tau,u)=H^{-1}(\tau,u)$ verifies conditions ii) and iii) and maps solutions of (\ref{90}) on solutions of (\ref{89}).
\end{definition}

\begin{theorem}\label{167}
If conditions \textbf{(c1)}, \textbf{(c2)} and \textbf{(c3)} hold, then (\ref{89}) and (\ref{90}) are topologically equivalent on  $\mathbb{R}^+$.
\end{theorem}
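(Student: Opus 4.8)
The plan is to follow the classical Palmer strategy adapted to the nonuniform-dichotomy setting, constructing $H$ and $G$ as perturbations of the identity by bounded functions built from the Green's operator $\mathcal{G}$. First I would fix a solution $x(t)=x(t,\tau,\xi)$ of \eqref{89} and look for the correction $z(t)$ such that $H[t,x(t)]=x(t)+z(t)$ solves \eqref{90}; substituting, $z$ must satisfy $\dot z = A(t)z + f(t, x(t)+z(t))$, and the bounded solution of this is given by the fixed point of the integral operator
\begin{equation*}
(\mathcal{T}z)(t) = \int_0^\infty \mathcal{G}(t,s)\, f\bigl(s, x(s)+z(s)\bigr)\, ds .
\end{equation*}
Condition \textbf{(c2)} guarantees $\mathcal{T}$ maps the space of bounded continuous functions into itself with bound $\mathfrak{p}$, and \textbf{(c3)} gives that $\mathcal{T}$ is a contraction with constant $\mathfrak{q}<1$ (using the Lipschitz bound $\mathfrak{v}$ on $f$), so Banach's fixed point theorem yields a unique bounded $z$ depending on $(\tau,\xi)$. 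Writing this bounded solution as a function of the point $u=\xi$ at time $\tau$ defines $H(\tau,u) = u + z_{\tau,u}(\tau)$; standard manipulations with the transition matrix show $H$ has the group-type property that makes condition i) of Definition \ref{208} hold, and ii) is immediate from the bound $\mathfrak{p}$.

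Symmetrically, to build $G$ I would fix a solution $y(t)=y(t,\tau,\eta)$ of \eqref{90} and seek a bounded $w(t)$ with $y(t)+w(t)$ solving \eqref{89}; this $w$ must solve $\dot w = A(t)w - f(t,y(t))$, whose unique bounded solution is
\begin{equation*}
w(t) = -\int_0^\infty \mathcal{G}(t,s)\, f\bigl(s, y(s)\bigr)\, ds ,
\end{equation*}
well-defined and bounded by $\mathfrak{p}$ thanks to \textbf{(c2)} alone (no contraction needed here since $f$ is evaluated along the known $y$). This defines $G(\tau,u)=u+w_{\tau,u}(\tau)$, again bounded and sending solutions of \eqref{90} to solutions of \eqref{89}.

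The core of the argument is then showing $H(\tau,\cdot)$ and $G(\tau,\cdot)$ are mutually inverse, which simultaneously gives iii). The idea is to show $G(\tau, H(\tau,\xi)) = \xi$ and $H(\tau,G(\tau,\eta))=\eta$ for every $\tau$. For the first, set $\eta = H(\tau,\xi)$, so $y(t) := H[t,x(t)]$ is the solution of \eqref{90} through $\eta$ at $\tau$; then $G[t,y(t)] = y(t) + w(t)$ is a solution of \eqref{89}, and $G[t,y(t)] - x(t) = z(t) + w(t)$ is bounded. I would argue that the difference of any two bounded solutions of \eqref{89} that stay close is forced to be zero: the function $\phi(t) = G[t,y(t)]-x(t)$ is a bounded solution of the homogeneous equation \eqref{89}, hence $\phi(t) = X(t,\tau)\phi(\tau)$, and projecting with $P(\tau)$ and $Q(\tau)$ and letting $t\to+\infty$ (resp. $t\to 0$, or using the backward estimate) the dichotomy \textbf{(c1)} forces $\phi(\tau)=0$ — here one must be slightly careful because $K(s)$ and $h$ are only a nonuniform bound, but boundedness of $\phi$ together with $h(t)\to 0$ kills the stable part and the unstable part is handled analogously. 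Continuity of $u\mapsto H(\tau,u)$ follows from continuous dependence of the fixed point $z$ on the parameter $\xi$ (uniform contraction), and likewise for $G$; together with the two-sided inverse relation this makes each a homeomorphism.

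The step I expect to be the main obstacle is the uniqueness/injectivity argument in the nonuniform setting: unlike the uniform-dichotomy case, the bound $\|X(t,s)P(s)\|\le K(s)h(t)/h(s)$ carries the possibly unbounded factor $K(s)$, so one cannot directly conclude that a bounded homogeneous solution must decay. The resolution is that the relevant estimate is always applied with the \emph{initial} time $s=\tau$ fixed, so $K(\tau)$ is just a constant, and $h(t)/h(\tau)\to 0$ as $t\to\infty$ does the job for the $P$-component; the $Q$-component is controlled by evaluating at $t\to 0$ where $h(0)=1$. Making this precise — and checking that all the integral operators are genuinely well-defined under \textbf{(c2)}–\textbf{(c3)} with the nonuniform weights, i.e. that $\int_0^\infty \|\mathcal{G}(t,s)\|\,\mathfrak{u}(s)\,ds$ really is finite and not merely formally bounded — is where the care is needed; the rest is the standard Palmer bookkeeping.
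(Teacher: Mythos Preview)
Your construction of $H$ and $G$ matches the paper's, but the mutual-inverse step has a genuine gap. You argue that $\phi(t)=G[t,y(t)]-x(t)$ is a bounded solution of the homogeneous system \eqref{89} on $\mathbb{R}^+$ and that the dichotomy \textbf{(c1)} then forces $\phi\equiv 0$. On the half line this is false: any initial datum in the range of $P(\tau)$ yields a bounded (in fact decaying) solution, since $\|X(t,\tau)P(\tau)\|\le K(\tau)\,h(t)/h(\tau)\to 0$ as $t\to\infty$. So boundedness of $\phi$ gives you no information about $P(\tau)\phi(\tau)$. Your phrase ``$h(t)\to 0$ kills the stable part'' has the implication reversed: $h(t)\to 0$ is exactly why stable solutions are bounded without being zero. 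The full-line Palmer argument works because one can also send $t\to -\infty$ to control the $P$-component; on $\mathbb{R}^+$ that direction is unavailable, and letting $t\to 0$ only lands you at a finite time.

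The paper circumvents this entirely and never appeals to uniqueness of bounded homogeneous solutions. It sets $\omega(t)=\bigl|H[t,G[t,y(t,\tau,\eta)]]-y(t,\tau,\eta)\bigr|$, checks that $\omega$ is bounded, and then uses the integral representations of $H$ and $G$ together with the flow identities \eqref{165}, \eqref{164}, \eqref{163} to rewrite $\omega(t)$ as an integral of $\|\mathcal{G}(t,s)\|\mathfrak{v}(s)\,\omega(s)$. Condition \textbf{(c3)} then gives $\sup_{t}\omega(t)\le \mathfrak{q}\,\sup_{t}\omega(t)$ with $\mathfrak{q}<1$, forcing $\omega\equiv 0$. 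The other composition $G\circ H=\mathrm{id}$ is obtained by a direct computation in which the two integral terms cancel exactly. Thus the inverse relations rely essentially on the contraction constant $\mathfrak{q}<1$ from \textbf{(c3)}, not merely on the splitting in \textbf{(c1)}; your proposal does not use \textbf{(c3)} at this step, and that is precisely where it breaks.
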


\begin{proof} We develop the proof in several steps.\\

\textit{Step 1: Auxiliary functions.} We define $w^*:\mathbb{R}^+\to \mathbb{R}^d$ for $(\tau,\eta)\in \mathbb{R}^+\times\mathbb{R}^d$ by
\begin{eqnarray}\label{166}
    w^*(t;(\tau,\eta))&=&-\int_0^\infty \mathcal{G}(t,s)f(s,y(s,\tau,\eta))ds \nonumber \\
    \\
    &=&-\int_0^t X(t,s)P(s)f(s,y(s,\tau,\eta))ds+\int_t^\infty X(t,s)Q(s)f(s,y(s,\tau,\eta))ds\nonumber.
\end{eqnarray}

We also define $T:BC(\mathbb{R}^+,\mathbb{R}^d)\to BC(\mathbb{R}^+,\mathbb{R}^d)$ for $(\tau,\xi)\in \mathbb{R}^+\times \mathbb{R}^d$ by
\begin{equation*}
    T(\phi)(t;(\tau,\xi))=\int_0^\infty \mathcal{G}(t,s)f(s,x(s,\tau,\xi)+\phi(s))ds.
\end{equation*}

$T$ is well defined by \textbf{(c2)}. Using condition \textbf{(c3)} we obtain
\begin{eqnarray*}
    \left| T(\phi)(t;(\tau,\xi))- T(\psi)(t;(\tau,\xi))\right|&\leq&\int_{0}^{\infty}\norm{\mathcal{G}(t,s)}\left|\phi(s)-\psi(s)\right|\mathfrak{v}(s)ds\\
    \\
    &\leq&\mathfrak{q}\norm{\phi-\psi}_\infty,
\end{eqnarray*}
hence, using Banach's fixed point Theorem, we obtain the existence of an unique fixed point
\begin{equation*}
    z^*(t;(\tau,\xi))=\int_0^\infty \mathcal{G}(t,s)f(s,x(s,\tau,\xi)+z^*(s;(\tau,\xi)))ds.
\end{equation*}

It is easy to verify $t\mapsto w^*(t;(\tau,\eta))$ is solution to the initial value problem
$$\left\{ \begin{array}{ccl}
            \dot{w}(t) & = & A(t)w(t)-f(t,y(t,\tau,\eta))\\
            \\w(0) & = & - \mathlarger{\int}_{0}^\infty X(0,s)Q(s)f(s,y(s,\tau,\eta))ds,
\end{array}
\right.$$
while $t\mapsto z^*(t;(\tau,\xi))$ is respectively solution to the initial value problem:
$$\left\{ \begin{array}{ccl}
            \dot{z}(t) & = & A(t)z(t)-f(t,x(t,\tau,\xi)+z(t))\\
            \\z(0) & = & - \mathlarger{\int}_{0}^\infty X(0,s)Q(s)f(s,x(s,\tau,\xi)+z^*(s;(\tau,\xi)))ds,
\end{array}
\right.$$
furthermore, by using \textbf{(c2)}, the maps $t\mapsto w^*(t;(t,\eta))$ and $t\mapsto z^*(t;(t,\xi))$ are uniformly bounded. \\

\textit{Step 2: Construct maps $H$ and $G$.} By uniqueness of solutions
\begin{equation}\label{165}
    x(t,\tau,\xi)=x(t,s,x(s,\tau,\xi))\text{  , for every }t,s,\tau\in \mathbb{R}^+,
\end{equation}
and
\begin{equation}\label{164}
    z^*(t;(\tau,\xi))=z^*(t;(s,x(s,\tau,\xi)))\text{  , for every }t,s,\tau\in \mathbb{R}^+.
\end{equation}

For every fixed $t\in \mathbb{R}^+$ we define $H(t,\cdot):\mathbb{R}^d\to \mathbb{R}^d$ and $G(t,\cdot):\mathbb{R}^d\to \mathbb{R}^d$ by
\begin{equation}\label{160}\left\{ \begin{array}{ccl}
            H(t,\xi) & = & \xi + \mathlarger{\int}_{0}^{\infty} \mathcal{G}(t,s)f(s,x(s,t,\xi)+z^*(s;(t,\xi)))ds \\
            \\ & = & \xi+z^*(t;(t,\xi))
\end{array}
\right.\end{equation}
and
\begin{equation}\label{161}\left\{ \begin{array}{ccl}
            G(t,\eta) & = & \eta - \mathlarger{\int}_{0}^{\infty} \mathcal{G}(t,s)f(s,y(s,t,\eta))ds \\
            \\ & = & \eta+w^*(t;(t,\eta)).
\end{array}
\right.\end{equation}

Using \textbf{(c2)}, it follows immediately that $H(t,\xi)-\xi$ and $ G(t,\eta)-\eta$ are bounded on $\mathbb{R}^+\times \mathbb{R}^d$, hence $H$ and $G$ satisfy condition ii) from Definition \ref{208}. In order to study additional properties of the map $G$, note that for $\tau \geq t$
$$y(t,\tau,\eta)=X(t,\tau)\eta-\int_t^\tau X(t,s)f(s,y(s,\tau,\eta))ds, $$
or equivalently
\begin{eqnarray*}
    X(\tau,t)y(t,\tau,\eta)&=&\eta-\int_t^\tau X(\tau,s)f(s,y(s,\tau,\eta))ds\\
    \\
    &=&\eta-\int_t^\tau X(\tau,s)P(s)f(s,y(s,\tau,\eta))ds-\int_t^\tau X(\tau,s)Q(s)f(s,y(s,\tau,\eta))ds,
\end{eqnarray*}
in particular, for $t=0$ we obtain
\begin{eqnarray*}
    X(\tau,0)y(0,\tau,\eta)&=&\eta-\int_0^\tau X(\tau,s)P(s)f(s,y(s,\tau,\eta))ds-\int_0^\tau X(\tau,s)Q(s)f(s,y(s,\tau,\eta))ds\\
    \\
    &=&\eta-\int_0^\tau X(\tau,s)P(s)f(s,y(s,\tau,\eta))ds\\
    \\
    & &+\int_\tau^\infty X(\tau,s)Q(s)f(s,y(s,\tau,\eta))ds-\int_0^\infty X(\tau,s)Q(s)f(s,y(s,\tau,\eta))ds\\
    \\
     &=&\eta-\int_0^\infty \mathcal{G}(\tau,s)f(s,y(s,\tau,\eta))ds\\
     \\&&-X(\tau,0)\int_0^\infty X(0,s)Q(s)f(s,y(s,\tau,\eta))ds\\
     \\
     &=&G(\tau,\eta)-X(\tau,0)w^*(0;(\tau,\eta)),
\end{eqnarray*}
thus
\begin{equation}\label{169}
    G(\tau,\eta)=X(\tau,0)\left\{y(0,\tau,\eta)+w^*(0;(\tau,\eta)) \right\}.
\end{equation}

\textit{Step 3: $H$ maps solutions of (\ref{89}) on solutions of (\ref{90}) and $G$ maps solutions of (\ref{90}) on solutions of (\ref{89})}. By uniqueness of solutions, by simple differentiation we obtain
\begin{equation}\label{162}
    H[t,x(t,\tau,\xi)]=y(t,\tau,H(\tau,\xi))
\end{equation}
and
\begin{equation}\label{163}
    G[t,y(t,\tau,\eta)]=x(t,\tau,G(\tau,\eta))=X(t,\tau)G(\tau,\eta),
\end{equation}
hence both maps satisfy condition i) of Definition \ref{208} respectively.\\

\textit{Step 4: $u\mapsto G(t,u)$ and $u\mapsto H(t,u)$ are bijective for every fixed $t\geq 0$.}\\ 

First we show $H(t,G(t,\eta))=\eta$ for every $t\geq 0$. Using (\ref{160}) and (\ref{161})
\begin{eqnarray*}
H(t,G[t,y(t,\tau,\eta)])&=&G[t,y(t,\tau,\eta)]\\
\\
&&+\int_0^\infty \mathcal{G}(t,s)f(s,x(s,t,G[t,y(t,\tau,\eta)])+z^*(s;(t,G[t,y(t,\tau,\eta)])))\\
\\
&=&y(t,\tau,\eta)-\int_0^\infty \mathcal{G}(t,s)f(s,y(s,\tau,\eta))ds\\
\\
& &+\int_0^\infty \mathcal{G}(t,s)f(s,x(s,t,G[t,y(t,\tau,\eta)])+z^*(s;(t,G[t,y(t,\tau,\eta)]))).
\end{eqnarray*}

Define $\omega(t)=\left|H[t,G[t,y(t,\tau,\eta)]]-y(t,\tau,\eta)\right|$. Note that
\begin{eqnarray*}
\omega(t)\leq \left|H[t,G[t,y(t,\tau,\eta)]]-G[t,y(t,\tau,\eta)]\right|+\left|G[t,y(t,\tau,\eta)]-y(t,\tau,\eta)\right|<\infty,
\end{eqnarray*}
since both $H$ and $G$ satisfy condition ii) of Definition \ref{208}. Thus, using \textbf{(c3)}, along with the previous expression and the identities (\ref{164}), (\ref{160}) and (\ref{163}), for an arbitrary $t\in\mathbb{R}^+$ we have
\begin{eqnarray*}
\omega(t)&=&\left|\int_0^\infty \mathcal{G}(t,s)\left\{f(s,x(s,t,G[t,y(t,\tau,\eta)])+z^*(s;(t,G[t,y(t,\tau,\eta)])))- f(s,y(s,\tau,\eta))\right\}ds \right|\\
\\
&\leq&\int_0^\infty \norm{\mathcal{G}(t,s)}\mathfrak{v}(s)\left|x(s,t,G[t,y(t,\tau,\eta)])+z^*(s;(t,G[t,y(t,\tau,\eta)]))- y(s,\tau,\eta)\right| ds\\
\\
&\leq&\int_0^\infty \norm{\mathcal{G}(t,s)}\mathfrak{v}(s)\left|x(s,t,x(t,\tau,G(\tau,\eta)))+z^*(s;(t,x(t,\tau,G(\tau,\eta))))-y(s,\tau,\eta)\right| ds\\
\\
&\leq&\int_0^\infty \norm{\mathcal{G}(t,s)}\mathfrak{v}(s)\left|x(s,\tau,G(\tau,\eta))+z^*(s;(s,x(s,\tau,G(\tau,\eta))))- y(s,\tau,\eta)\right| ds\\
\\
&=&\int_0^\infty \norm{\mathcal{G}(t,s)}\mathfrak{v}(s)\left|H[s,G[s,y(s,\tau,\eta)])]- y(s,\tau,\eta)\right| ds\\
\\
&=&\int_0^\infty \norm{\mathcal{G}(t,s)}\mathfrak{v}(s)\omega(s)ds\leq \mathfrak{q}\cdot \sup_{s\in \mathbb{R}^+}\{ \omega(s)\}.
\end{eqnarray*}

Thus $\omega(t)=0$ for every $t\in \mathbb{R}^+$, otherwise we get a contradiction. In particular, taking  $t=\tau$ we obtain $H(\tau,G(\tau,\eta))=\eta$. Now we show $G(t,H(t,\xi))=\xi$. Indeed, using (\ref{165}), (\ref{164}), (\ref{160}), (\ref{161}) and (\ref{162}) we get
\begin{eqnarray*}
G[t,H[t,x(t,\tau,\xi)]]&=&H[t,x(t,\tau,\xi)]-\int_0^\infty \mathcal{G}(t,s)f(s,y(s,t,H[t,x(t,\tau,\xi)]))ds\\
\\
&=&H[t,x(t,\tau,\xi)]-\int_0^\infty \mathcal{G}(t,s)f(s,y(s,t,y(t,\tau,H(\tau,\xi))))ds\\
\\
&=&H[t,x(t,\tau,\xi)]-\int_0^\infty \mathcal{G}(t,s)f(s,y(s,\tau,H(\tau,\xi)))ds\\
\\
&=&x(t,\tau,\xi)+\int_0^\infty \mathcal{G}(t,s)f(s,x(s,t,x(t,\tau,\xi))+z^*(s;(t,x(t,\tau,\xi))))ds\\
\\
& &-\int_0^\infty \mathcal{G}(t,s)f(s,y(s,\tau,H(\tau,\xi)))ds\\
\\
&=&x(t,\tau,\xi)+\int_0^\infty \mathcal{G}(t,s)f(s,x(s,\tau,x(\tau,\tau,\xi))+z^*(s;(s,x(s,\tau,\xi))))ds\\
\\
& &-\int_0^\infty \mathcal{G}(t,s)f(s,y(s,\tau,H(\tau,\xi)))ds\\
\\
&=&x(t,\tau,\xi)\\
\\&&+\int_0^\infty \mathcal{G}(t,s)\left\{f(s,x(s,\tau,\xi)+z^*(s;(s,x(s,\tau,\xi))))-f(s,y(s,\tau,H(\tau,\xi)))\right\}ds\\
\\
&=&x(t,\tau,\xi)+\int_0^\infty \mathcal{G}(t,s)\left\{f(s,H(s,x(s,\tau,\xi)))-f(s,y(s,\tau,H(\tau,\xi)))\right\}ds\\
\\
&=&x(t,\tau,\xi)+\int_0^\infty \mathcal{G}(t,s)\left\{f(s,y(s,\tau,H(\tau,\xi)))-f(s,y(s,\tau,H(\tau,\xi)))\right\}ds\\
\\
&=&x(t,\tau,\xi),
\end{eqnarray*}
evaluating on $t=\tau$ we obtain $G(\tau,H(\tau,\xi))=\xi$.\\

\textit{Step 5: $u\mapsto G(t,u)$ is continuous.} \\

It is enough to show $\eta\mapsto w^*(t;(t,\eta))$ is continuous for every $t\geq 0$, since $G(t,\eta)=\eta+w^*(t;(t,\eta))$. Let $\eta\in \mathbb{R}^d$ and $\{ \eta_n\}_{n\in \mathbb{N}}\subset\mathbb{R}^d$ be a sequence such that $\lim_{n\to \infty}\eta_n=\eta$. Fix $t,\tau\in \mathbb{R}^+$ and define $(a_n)_{n\in \mathbb{N}}$ the sequence of functions over $\mathbb{R}^+$ given by
$$a_n(s)=\mathcal{G}(t,s)f(s,y(s,\tau,\eta_n)),$$
notice that
$$|a_n(s)|\leq \norm{\mathcal{G}(t,s)} \mathfrak{u}(s)\text{ , for every }s\in \mathbb{R}^+ \text{ and }n\in\mathbb{N}.$$

On the other hand, as $u\mapsto f(s,u)$ and $\xi\mapsto y(s,\tau,\xi)$ are continuous, it is clear $(a_n)_{n\in \mathbb{N}}$ converges pointwise to $a:\mathbb{R}^+\to \mathbb{R}^d$ given by
$$a(s)=\mathcal{G}(t,s)f(s,y(s,\tau,\eta)).$$

Thus, by Lebesgue's dominated convergence Theorem we have
\begin{eqnarray*}
\lim_{n\to \infty}w^*(t;(\tau,\eta_n))&=&\lim_{n\to \infty}-\int_0^\infty \mathcal{G}(t,s)f(s,y(s,\tau,\eta_n))ds\\
\\&=&-\lim_{n\to \infty}\int_0^\infty a_n(s)ds=-\int_0^\infty a(s)ds=w^*(t;(\tau,\eta)).
\end{eqnarray*}

Hence $\eta\mapsto w^*(t;(\tau,\eta))$ is continuous and in particular $\eta\mapsto w^*(t;(t,\eta))$ is also continuous.\\

\textit{Step 6: $u\mapsto H(t,u)$ is continuous.}\\

It is enough to show $\xi\mapsto z^*(t;(t,\xi))$ is continuous for every $t\geq 0$.\\

Let $\xi\in \mathbb{R}^d$ and a sequence $\{ \xi_n\}_{n\in \mathbb{N}}\subset\mathbb{R}^d$ such that $\lim_{n\to \infty}\xi_n=\xi$. Let $u\mapsto \phi(t;(\tau,u))$ be a continuous function for $t,\tau\in \mathbb{R}^+$ fixed. We define
$$b_n(s)=\mathcal{G}(t,s)f(s,x(s,\tau,\xi_n)+\phi(s;(\tau,\xi_n))),$$
note it satisfies
$$|b_n(s)|\leq \norm{\mathcal{G}(t,s)} \mathfrak{u}(s)\text{ , para todo }s\in \mathbb{R}^+\text{ y }n\in\mathbb{N},$$
and
$$\lim_{n\to \infty}b_n(s)=\mathcal{G}(t,s)f(s,x(s,\tau,\xi)+\phi(s;(\tau,\xi))):=b(s).$$

Using Lebesgue's dominated convergence Theorem we have
\begin{eqnarray*}
    \lim_{n\to \infty}(T \phi)(t;(\tau,\xi_n))&=&\lim_{n\to\infty} \int_0^\infty \mathcal{G}(t,s)f(s,x(s,\tau,\xi_n)+\phi(s;(\tau,\xi_n)))ds\\
    \\
    &=&\lim_{n\to \infty}\int_0^\infty b_n(s)ds
    =\int_0^\infty b(s)ds=(T \phi)(t;(\tau,\xi)),
\end{eqnarray*}
thus $\xi\mapsto (T \phi)(t;(\tau,\xi))$ is continuous, and hence it's fixed point $\xi\mapsto z^*(t;(\tau,\xi))$ is continuous as well, which allows us to conclude $H$ is continuous and so it is an homeomorphism. In conclusion (\ref{89}) and (\ref{90}) are topologically equivalent on $\mathbb{R}^+$.
\end{proof}

\section{Differentiability of topological equivalence under a dichotomy}

In this section we prove that the topological equivalence is of class $C^1.$ Our approach does not impose resonance conditions or spectral gaps. 

We recall of the definition, introduced on \cite{Castaneda3}, of $C^r-$ topologically equivalent on the positive half line .

\begin{definition}
The systems (\ref{89}) and (\ref{90}) are $C^r$-topologically equivalent on $\mathbb{R}^+$ if they are topologically equivalent on $\mathbb{R}^+$ with the map $u\mapsto H(t,u)$, which is a diffeomorphism of class $C^r$, with $r\geq 1$, for every fixed $t\geq 0$.
\end{definition}

\begin{lemma}\label{168}
If conditions \textbf{(c1)-(c5)} hold, with $r=1$ on \textbf{(c4)}, then $\eta\mapsto w^*(0;(\tau,\eta))$ defined on (\ref{166}) is a $C^1$ map.
\end{lemma}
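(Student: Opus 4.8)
The plan is to show that $\eta \mapsto w^*(0;(\tau,\eta))$ is $C^1$ by differentiating under the integral sign in the formula
$$w^*(0;(\tau,\eta)) = -\int_0^\infty \mathcal{G}(0,s) f(s,y(s,\tau,\eta))\, ds,$$
which reduces the problem to two ingredients: the differentiability of $\eta \mapsto y(s,\tau,\eta)$ together with a usable bound on its derivative, and a dominated-convergence argument legitimizing the interchange of $\partial_\eta$ and $\int_0^\infty$. First I would recall the classical fact that, since $A$ is continuous and bounded and $f$ is $C^1$ in $u$ with bounded $u$-derivative (condition \textbf{(c4)} with $r=1$), the solution map $\eta \mapsto y(s,\tau,\eta)$ of \eqref{90} is $C^1$, and its derivative $\Phi(s) := \frac{\partial y}{\partial \eta}(s,\tau,\eta)$ solves the variational equation $\dot{\Phi}(s) = \bigl(A(s) + \frac{\partial f}{\partial u}(s,y(s,\tau,\eta))\bigr)\Phi(s)$ with $\Phi(\tau) = I$. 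Grönwall's inequality then gives, for $s \geq \tau$, the estimate
$$\norm{\Phi(s)} \leq \exp\left(\int_\tau^s \norm{A(r)} + \mathfrak{v}(r)\, dr\right),$$
using that $\norm{\frac{\partial f}{\partial u}(s,u)} \leq \mathfrak{v}(s)$ (this is the Lipschitz constant from the preliminaries, consistent with \textbf{(c4)}).

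Next I would assemble the candidate derivative. Formally,
$$\frac{\partial w^*}{\partial \eta}(0;(\tau,\eta)) = -\int_0^\infty \mathcal{G}(0,s)\, \frac{\partial f}{\partial u}(s,y(s,\tau,\eta))\, \Phi(s)\, ds,$$
and the integrand is bounded in norm by $\norm{\mathcal{G}(0,s)}\,\mathfrak{v}(s)\,\norm{\Phi(s)}$. For $s \le \tau$ one controls $\norm{\Phi(s)}$ similarly (or simply notes the contribution of a compact $s$-interval is harmless), while for $s \geq \tau$ we invoke \textbf{(c1)}: since $0 \le s$, the relevant block of $\mathcal{G}(0,s)$ is $-X(0,s)Q(s)$ with $\norm{X(0,s)Q(s)} \le K(s) h(s)/h(0) = K(s)h(s)$. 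Hence the integrand is dominated by $K(s) h(s)\, \mathfrak{v}(s) \exp\!\left(\int_\tau^s \norm{A(r)} + \mathfrak{v}(r)\, dr\right)$, which is exactly the function whose integrability over $[\tau,\infty)$ is guaranteed by condition \textbf{(c5)}. This produces an integrable dominating function independent of $\eta$ on a neighborhood of any fixed $\eta_0$ (the exponential factor is uniform in $\eta$ since the bound on $\frac{\partial f}{\partial u}$ is), so the standard theorem on differentiation under the integral sign applies and yields that $w^*(0;(\tau,\cdot))$ is differentiable with the derivative above.

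Finally I would argue continuity of $\eta \mapsto \frac{\partial w^*}{\partial \eta}(0;(\tau,\eta))$: the integrand depends continuously on $\eta$ (because $y(s,\tau,\eta)$, $\frac{\partial f}{\partial u}$, and $\Phi(s)$ all do), and it is dominated uniformly in $\eta$ near $\eta_0$ by the \textbf{(c5)} function, so Lebesgue's dominated convergence theorem gives continuity of the integral in $\eta$, exactly as in Steps 5 and 6 of the proof of Theorem \ref{167}. Combining differentiability with continuity of the derivative gives $w^*(0;(\tau,\cdot)) \in C^1$.

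The main obstacle is the tail estimate: unlike the topological-equivalence argument, where one only needs $\norm{\mathcal{G}(t,s)}\mathfrak{v}(s)$ integrable (condition \textbf{(c3)}), here the variational flow $\Phi(s)$ can grow, and one must show the growth of $\norm{\Phi(s)}$ (at worst exponential with rate $\norm{A} + \mathfrak{v}$) is outpaced by the decay of the unstable Green kernel $K(s)h(s)$ weighted by $\mathfrak{v}(s)$. This is precisely the role of the somewhat technical hypothesis \textbf{(c5)}, and verifying that it is exactly strong enough to close the domination — and uniformly in $\eta$ on neighborhoods — is the delicate point; everything else is routine invocation of smooth dependence on initial conditions and dominated convergence.
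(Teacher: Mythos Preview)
Your proposal is correct and follows essentially the same route as the paper: both reduce the problem to dominating the $\eta$-derivative of the integrand by the function in \textbf{(c5)} via the Gr\"onwall bound $\norm{\partial y/\partial\eta(s,\tau,\eta)}\le\exp\bigl(\int_\tau^s\norm{A(r)}+\mathfrak{v}(r)\,dr\bigr)$ for $s\ge\tau$, handle the compact interval $[0,\tau]$ separately, and conclude by dominated convergence. The only cosmetic difference is that the paper works directly with difference quotients (deriving the Lipschitz estimate $|y(s,\tau,\eta)-y(s,\tau,\tilde\eta)|\le|\eta-\tilde\eta|\exp(\int_\tau^s\norm{A}+\mathfrak{v})$ and bounding $\varphi_n(s)$ explicitly), whereas you invoke the differentiation-under-the-integral theorem as a package; your version also makes the $C^1$ (not merely differentiable) conclusion explicit by adding the continuity-of-the-derivative step, which the paper leaves implicit.
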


\begin{proof} Fix $\eta\in \mathbb{R}^d$ and let $(\delta_n)_{n\in \mathbb{N}}\subset \mathbb{R}^d$ be a properly convergent to zero sequence. Fix $\tau\in \mathbb{R}^+$ and define
$$\varphi_n(s)=\mathcal{G}(0,s)\frac{f(s,y(s,\tau,\eta+\delta_n))-f(s,y(s,\tau,\eta))-\frac{\partial f}{\partial u}(s,y(s,\tau,\eta))\frac{\partial y}{\partial \eta}(s,\tau,\eta)\delta_n}{|\delta_n|}.$$

As $\eta\mapsto y(s,\tau,\eta)$ is continuous, then $\lim_{n\to \infty}y(s,\tau,\eta+\delta_n)=y(s,\tau,\eta)$. Using \textbf{(c4)} it follows from classic results of differentiability respect to the initial conditions (see for example Theorem 4.1 in \cite{Hartman}) that $\eta\mapsto y(s,\tau,\eta)$ is differentiable. Again by \textbf{(c4)} we obtain
$$\lim_{n\to\infty}\varphi_n(s)=0.$$

Also note
\begin{eqnarray*}
    \norm{\frac{\partial f}{\partial u}(s,u)}=\lim_{\delta\to 0}\frac{|f(s,u+\delta)-f(s,u)|}{|\delta|}\leq\lim_{\delta\to 0}\mathfrak{v}(s)=\mathfrak{v}(s),
\end{eqnarray*}
hence
\begin{eqnarray*}
    |\varphi_n(s)|&\leq& \norm{\mathcal{G}(0,s)}\frac{\left|f(s,y(s,\tau,\eta+\delta_n))-f(s,y(s,\tau,\eta))\right|+\left|\frac{\partial f}{\partial u}(s y(s,\tau,\eta))\frac{\partial y}{\partial \eta}(s,\tau,\eta)\delta_n\right|}{|\delta_n|}\\
    \\
    &\leq& \norm{\mathcal{G}(0,s)}\frac{\mathfrak{v}(s)\left|y(s,\tau,\eta+\delta_n)-y(s,\tau,\eta)\right|+\mathfrak{v}(s)\left|\frac{\partial y}{\partial \eta}(s,\tau,\eta)\delta_n\right|}{|\delta_n|}\\
    \\
    &\leq& \norm{\mathcal{G}(0,s)}\mathfrak{v}(s)\left(\frac{\left|y(s,\tau,\eta+\delta_n)-y(s,\tau,\eta)\right|}{|\delta_n|}+\norm{\frac{\partial y}{\partial \eta}(s,\tau,\eta)}\right).\\
\end{eqnarray*}

Let $\Tilde{\eta}\in \mathbb{R}^d$ and $s\geq \tau$. We know
$$y(s,\tau,\eta)=y(\tau,\tau,\eta)+\int_\tau^s \dot{y}(r,\tau,\eta)dr=\eta+\int_\tau^s \dot{y}(r,\tau,\eta)dr,$$
hence
$$y(s,\tau,\eta)-y(s,\tau,\Tilde{\eta})=\eta-\Tilde{\eta}+\int_\tau^s \dot{y}(r,\tau,\eta)- \dot{y}(r,\tau,\Tilde{\eta})dr.$$

Defining $\mathfrak{z}(s)=\dot{y}(s,\tau,\eta)-\dot{y}(s,\tau,\Tilde{\eta})$, the previous expression implies
$$|y(s,\tau,\eta)-y(s,\tau,\Tilde{\eta})|\leq|\eta-\Tilde{\eta}|+\int_\tau^s |\mathfrak{z}(r)|dr.$$

Note
$$\dot{y}(s,\tau,\eta)-\dot{y}(s,\tau,\Tilde{\eta})=A(s)\left[y(s,\tau,\eta)-y(s,\tau,\Tilde{\eta}) \right]+f(s,y(s,\tau,\eta))-f(s,y(s,\tau,\Tilde{\eta})),$$
from where it follows that
\begin{eqnarray*}
|\mathfrak{z}(s)|&\leq& \norm{A(s)}|y(s,\tau,\eta)-y(s,\tau,\Tilde{\eta})|+|f(s,y(s,\tau,\eta))-f(s,y(s,\tau,\Tilde{\eta}))|\\
\\&\leq& \norm{A(s)}|y(s,\tau,\eta)-y(s,\tau,\Tilde{\eta})|+\mathfrak{v}(s)|y(s,\tau,\eta)-y(s,\tau,\Tilde{\eta})|\\
\\&\leq& \left(\norm{A(s)}+\mathfrak{v}(s)\right)|y(s,\tau,\eta)-y(s,\tau,\Tilde{\eta})|\\
\\&\leq& \left(\norm{A(s)}+\mathfrak{v}(s)\right)\left[|\eta-\Tilde{\eta}|+\int_\tau^s |\mathfrak{z}(r)|dr\right].
\end{eqnarray*}

Define $\mathcal{Z}(s)=|\eta-\Tilde{\eta}|+\int_\tau^s|\mathfrak{z}(r)|dr$. Note  $s> \tau \Rightarrow \mathcal{Z}\neq 0$, thus, the previous expression becomes
$$\frac{\mathcal{Z}'(s)}{\mathcal{Z}(s)}\leq \left(\norm{A(s)}+\mathfrak{v}(s)\right),$$
from where
$$\log\left(\mathcal{Z}(s)\right)-\log \left(\mathcal{Z}(\tau)\right)=\log \left(\frac{\mathcal{Z}(s)}{|\eta-\Tilde{\eta}|}\right)\leq \int_\tau^s \norm{A(r)}+\mathfrak{v}(r)dr,$$
hence
$$|y(s,\tau,\eta)-y(s,\tau,\Tilde{\eta})|\leq|\eta-\Tilde{\eta}|+\int_\tau^s |z(r)|dr=\mathcal{Z}(s)\leq |\eta-\Tilde{\eta}|\exp\left(\int_\tau^s \norm{A(r)}+\mathfrak{v}(r)dr\right),$$
so, for $s\geq \tau$
$$\frac{|y(s,\tau,\eta)-y(s,\tau,\eta+\delta_n)|}{|\delta_n|}\leq \exp\left(\int_\tau^s \norm{A(r)}+\mathfrak{v}(r)dr\right),$$
which in particular implies
$$\norm{\frac{\partial y}{\partial \eta}(s,\tau,\eta)}\leq \exp\left(\int_\tau^s \norm{A(r)}+\mathfrak{v}(r)dr\right).$$

In conclusion, for $s\geq \tau$ we have
\begin{eqnarray*}
    |\varphi_n(s)|    &\leq& \norm{\mathcal{G}(0,s)}\mathfrak{v}(s)\left(\frac{\left|y(s,\tau,\eta+\delta_n)-y(s,\tau,\eta)\right|}{|\delta_n|}+\norm{\frac{\partial y}{\partial \eta}(s,\tau,\eta)}\right)\\
    \\&\leq&2 \norm{\mathcal{G}(0,s)}\mathfrak{v}(s)\exp\left(\int_\tau^s \norm{A(r)}+\mathfrak{v}(r)dr\right)\\
    \\&\leq &2K(s)h(s)\mathfrak{v}(s)\exp\left(\int_\tau^s \norm{A(r)}+\mathfrak{v}(r)dr\right).
\end{eqnarray*}

On the other hand, on the compact $[0,\tau]$ the sequence of continuous functions $s \mapsto \frac{\left|y(s,\tau,\eta+\delta_n)-y(s,\tau,\eta)\right|}{|\delta_n|}$ converge pointwise to $s\mapsto \norm{\frac{\partial y}{\partial \eta}(s,\tau,\eta)}$ when $n\to \infty$, which is also continuous, hence the convergence is uniform, \textit{i.e. } there exist $\hat{n}\in \mathbb{N}$ such that $n\geq \hat{n}$ and $s\in [0,\tau]$ we have
$$\frac{\left|y(s,\tau,\eta+\delta_n)-y(s,\tau,\eta)\right|}{|\delta_n|}+\norm{\frac{\partial y}{\partial \eta}(s,\tau,\eta)}\leq 2 \norm{\frac{\partial y}{\partial \eta}(s,\tau,\eta)} +1.$$

Hence, for $n\geq \hat{n}$ and $s\in [0,\tau]$ we have
\begin{eqnarray*}
    |\varphi_n(s)|    &\leq& \norm{\mathcal{G}(0,s)}\mathfrak{v}(s)\left(\frac{\left|y(s,\tau,\eta+\delta_n)-y(s,\tau,\eta)\right|}{|\delta_n|}+\norm{\frac{\partial y}{\partial \eta}(s,\tau,\eta)}\right)\\
    \\&\leq &K(s)h(s)\mathfrak{v}(s)\left( 2 \norm{\frac{\partial y}{\partial \eta}(s,\tau,\eta)} +1\right).
\end{eqnarray*}

Summarizing, if we define $\mathcal{F}:\mathbb{R}^+\to \mathbb{R}$ by
\begin{equation*}
    \mathcal{F}(s)= \left\{ \begin{array}{cc}
             K(s)h(s)\mathfrak{v}(s)\left( 2 \norm{\frac{\partial y}{\partial \eta}(s,\tau,\eta)} +1\right)  & \tau\geq s \geq 0, \\
             \\ 2K(s)h(s)\mathfrak{v}(s)\exp\left(\int_\tau^s \norm{A(r)}+\mathfrak{v}(r)dr\right) & \forall s\geq \tau,
             \end{array}
   \right.
\end{equation*}
we get $|\varphi_n(s)|\leq \mathcal{F}(s)$ for every $n\geq\hat{n}$. Note that using \textbf{(c5)}
\begin{eqnarray*}
\int_0^\infty \mathcal{F}(s)ds&\leq&\int_0^\tau  K(s)h(s)\mathfrak{v}(s)\left( 2 \norm{\frac{\partial y}{\partial \eta}(s,\tau,\eta)} +1\right) ds\\
\\& &+2\int_\tau^\infty K(s)h(s)\mathfrak{v}(s)\exp\left(\int_\tau^s \norm{A(r)}+\mathfrak{v}(r)dr\right) ds <+\infty.
\end{eqnarray*}

Thus, using Lebesgue's dominated convergence Theorem we obtain

$$\lim_{n\to \infty}\frac{ w^*(0;(\tau,\eta+\delta_n))-w^*(0;(\tau,\eta))+\left[ \int_0^\infty \mathcal{G}(0,s)\frac{\partial f}{\partial u}(s,y(s,\tau,\eta))\frac{\partial y}{\partial \eta}(s,\tau,\eta)ds\right]\delta_n}{|\delta_n|}$$
    
$$=\lim_{n\to \infty}\int_0^\infty-\varphi_n(s)ds=-\int_0^\infty \left(\lim_{n\to \infty}\varphi_n(s)\right)ds=0,$$
hence $\eta\mapsto w^*(0;(\tau,\eta))$ is differentiable.
\end{proof}

\begin{corollary}\label{220}
If conditions \textbf{(c1)-(c5)} hold, with $r=1$ on \textbf{(c4)}, then for every fixed $\tau\in \mathbb{R}^+$ we have
$$ \frac{\partial w^*(0;(\tau,\eta))}{\partial \eta}=-\int_{0}^\infty \mathcal{G}(0,s)\frac{\partial f}{\partial u}(s,y(s,\tau,\eta))\frac{\partial y}{\partial \eta}(s,\tau,\eta)ds.$$
\end{corollary}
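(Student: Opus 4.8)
The plan is to read off the derivative directly from the last display in the proof of Lemma~\ref{168}. There one shows that for every sequence $(\delta_n)_{n\in\mathbb N}\subset\mathbb R^d$ converging properly to zero,
\begin{equation*}
\lim_{n\to\infty}\frac{w^*(0;(\tau,\eta+\delta_n))-w^*(0;(\tau,\eta))+\left[\int_0^\infty \mathcal G(0,s)\frac{\partial f}{\partial u}(s,y(s,\tau,\eta))\frac{\partial y}{\partial\eta}(s,\tau,\eta)\,ds\right]\delta_n}{|\delta_n|}=0 .
\end{equation*}
The first thing I would check is that the integral in the bracket converges absolutely, so that it defines an honest matrix $M\in\mathcal M_d(\mathbb R)$: for $s\ge\tau$ the integrand is dominated by $2K(s)h(s)\mathfrak v(s)\exp\!\left(\int_\tau^s\norm{A(r)}+\mathfrak v(r)\,dr\right)$, which is integrable on $[\tau,\infty)$ by \textbf{(c5)}, while on the compact interval $[0,\tau]$ it is continuous, hence integrable there; this is exactly the dominating function $\mathcal F$ built in the proof of Lemma~\ref{168}.

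Next I would invoke the sequential characterization of limits in $\mathbb R^d$: since the displayed limit equals $0$ for \emph{every} sequence $\delta_n\to0$ with $\delta_n\neq0$, the limit
$$\lim_{\delta\to0}\frac{\bigl|w^*(0;(\tau,\eta+\delta))-w^*(0;(\tau,\eta))-(-M)\delta\bigr|}{|\delta|}=0$$
holds, which is precisely the statement that $\eta\mapsto w^*(0;(\tau,\eta))$ is differentiable at $\eta$ with derivative the bounded linear map $\delta\mapsto -M\delta$. By uniqueness of the derivative, its Jacobian matrix is $\frac{\partial w^*(0;(\tau,\eta))}{\partial\eta}=-M$, which is the asserted identity; since $\eta$ and $\tau$ were arbitrary, it holds on all of $\mathbb R^+\times\mathbb R^d$.

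There is essentially no genuine obstacle here, as the analytic content was already produced inside the proof of Lemma~\ref{168}; the only points requiring a little care are the bookkeeping of the sign (the correction term enters the numerator with a $+$, so the derivative is the \emph{negative} of the bracketed integral) and the remark that the candidate derivative is a bona fide bounded operator, i.e. that the defining integral converges --- both immediate from the estimates already in hand. If one additionally wants to upgrade this to the $C^1$ claim of Lemma~\ref{168}, one more dominated-convergence argument applied to the formula just obtained does it, using continuity of $\eta\mapsto\frac{\partial f}{\partial u}(s,y(s,\tau,\eta))$ and of $\eta\mapsto\frac{\partial y}{\partial\eta}(s,\tau,\eta)$ together with the same dominating function $\mathcal F$.
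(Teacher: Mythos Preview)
Your proposal is correct and follows exactly the paper's approach: the paper states this corollary without a separate proof, because the formula is read off directly from the final display in the proof of Lemma~\ref{168}, which is precisely what you do. Your additional remarks on absolute convergence of the candidate derivative and the sign bookkeeping are accurate and, if anything, make explicit what the paper leaves implicit.
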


\begin{theorem}\label{225}
If conditions \textbf{(c1)-(c5)} hold, with $r=1$ on \textbf{(c4)}, then the systems (\ref{89}) and (\ref{90}) are $C^1$-topologically equivalent on $\mathbb{R}^+$.
\end{theorem}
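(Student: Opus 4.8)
\emph{Proof sketch (plan).} The plan is to upgrade $G(\tau,\cdot)$ to a $C^1$ map using the machinery already assembled, compute its derivative explicitly, prove that derivative is everywhere invertible, and then deduce $H(\tau,\cdot)=G(\tau,\cdot)^{-1}\in C^1$ from the inverse function theorem. First I would invoke $(\ref{169})$, namely $G(\tau,\eta)=X(\tau,0)\{y(0,\tau,\eta)+w^*(0;(\tau,\eta))\}$: since $r=1$ in \textbf{(c4)}, classical differentiable dependence on initial data (Theorem~4.1 in \cite{Hartman}) gives $\eta\mapsto y(0,\tau,\eta)\in C^1$, while Lemma~\ref{168} gives $\eta\mapsto w^*(0;(\tau,\eta))\in C^1$; hence $G(\tau,\cdot)\in C^1(\mathbb{R}^d,\mathbb{R}^d)$ for every fixed $\tau$, and by Corollary~\ref{220},
\[
\frac{\partial G}{\partial\eta}(\tau,\eta)=X(\tau,0)\left(\frac{\partial y}{\partial\eta}(0,\tau,\eta)-\int_0^\infty\mathcal{G}(0,s)\frac{\partial f}{\partial u}(s,y(s,\tau,\eta))\frac{\partial y}{\partial\eta}(s,\tau,\eta)\,ds\right).
\]

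Next I would put this in a usable form. Write $Y(s)=\frac{\partial y}{\partial\eta}(s,\tau,\eta)$, which solves the variational equation $\dot Y=[A(s)+\frac{\partial f}{\partial u}(s,y(s,\tau,\eta))]Y$ with $Y(\tau)=I$, and set $W(t)=-\int_0^\infty\mathcal{G}(t,s)\frac{\partial f}{\partial u}(s,y(s,\tau,\eta))Y(s)\,ds$. Differentiating under the integral sign, using $\frac{\partial\mathcal{G}}{\partial t}(t,s)=A(t)\mathcal{G}(t,s)$ together with the $s=t$ boundary term $P(t)+Q(t)=I$, one checks that $\dot W=A(t)W-\frac{\partial f}{\partial u}(t,y(t,\tau,\eta))Y(t)$; hence $Z:=Y+W$ solves the linear system $\dot Z=A(t)Z$, so $Z(t)=X(t,\tau)Z(\tau)$ for every $t$. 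Since the displayed formula reads $\frac{\partial G}{\partial\eta}(\tau,\eta)=X(\tau,0)Z(0)=X(\tau,0)X(0,\tau)Z(\tau)$, it collapses to the clean expression
\[
\frac{\partial G}{\partial\eta}(\tau,\eta)=Z(\tau)=I+W(\tau)=I-\int_0^\infty\mathcal{G}(\tau,s)\frac{\partial f}{\partial u}(s,y(s,\tau,\eta))\frac{\partial y}{\partial\eta}(s,\tau,\eta)\,ds.
\]

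Then I would prove $\frac{\partial G}{\partial\eta}(\tau,\eta)$ is nonsingular. If $\frac{\partial G}{\partial\eta}(\tau,\eta)v=0$ for some $v\in\mathbb{R}^d$, then $Z(t)v=X(t,\tau)Z(\tau)v=0$ for all $t\ge0$, so the variational solution $\psi(t):=Y(t)v$ satisfies the Green-operator fixed-point identity $\psi(t)=-W(t)v=\int_0^\infty\mathcal{G}(t,s)\frac{\partial f}{\partial u}(s,y(s,\tau,\eta))\psi(s)\,ds$. Now $\norm{\frac{\partial f}{\partial u}(s,\cdot)}\le\mathfrak{v}(s)$, and, exactly as in the proof of Lemma~\ref{168}, $\norm{Y(s)}\le\exp(\int_\tau^s\norm{A(r)}+\mathfrak{v}(r)\,dr)$; together with \textbf{(c5)} this controls $\psi$ sharply enough that the operator $\phi\mapsto\int_0^\infty\mathcal{G}(\cdot,s)\frac{\partial f}{\partial u}(s,y(s,\tau,\eta))\phi(s)\,ds$ acts on $\psi$ as a strict contraction (norm $\le\mathfrak{q}<1$ by \textbf{(c3)}), which forces $\psi\equiv0$ and hence $v=\psi(\tau)=0$. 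Thus $\frac{\partial G}{\partial\eta}(\tau,\eta)$ is invertible for every $(\tau,\eta)$, and since $G(\tau,\cdot)$ is a $C^1$ bijection of $\mathbb{R}^d$ by Theorem~\ref{167}, the inverse function theorem yields $H(\tau,\cdot)=G(\tau,\cdot)^{-1}\in C^1$ with $\frac{\partial H}{\partial\xi}(\tau,\xi)=\big[\frac{\partial G}{\partial\eta}(\tau,H(\tau,\xi))\big]^{-1}$. Therefore, for every fixed $\tau$, the map $u\mapsto H(\tau,u)$ is a $C^1$ diffeomorphism, i.e. $(\ref{89})$ and $(\ref{90})$ are $C^1$-topologically equivalent on $\mathbb{R}^+$.

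I expect the nonsingularity step to be the main obstacle: there is no spectral gap available and no a priori smallness of $\frac{\partial G}{\partial\eta}(\tau,\eta)-I$, so the vanishing of $\psi$ has to be squeezed out of the interaction between the Green-operator identity, the contraction bound \textbf{(c3)}, and the growth/integrability control \textbf{(c5)} --- precisely the balance that already drives Lemma~\ref{168}, transplanted to the variational equation. (An alternative would be to differentiate directly the fixed-point relation defining $z^*$, obtaining $H(\tau,\cdot)\in C^1$ by a dominated-convergence argument mirroring Lemma~\ref{168}; but because $z^*$ is only implicitly defined, that route runs into the same difficulty through the implicit error term, and closing it ultimately uses \textbf{(c3)} to absorb that term.)
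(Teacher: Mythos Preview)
Your route to $G(\tau,\cdot)\in C^1$ via the representation $(\ref{169})$, Lemma~\ref{168}, and smooth dependence on initial data is exactly the paper's argument; the explicit identity $\frac{\partial G}{\partial\eta}(\tau,\eta)=I-\int_0^\infty\mathcal{G}(\tau,s)\frac{\partial f}{\partial u}(s,y(s,\tau,\eta))\frac{\partial y}{\partial\eta}(s,\tau,\eta)\,ds$ that you derive is correct and goes beyond what the paper records.

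The gap is in the nonsingularity step, and you have correctly identified it as the obstacle but not closed it. For the inequality $|\psi(t)|\le\int_0^\infty\norm{\mathcal{G}(t,s)}\mathfrak v(s)|\psi(s)|\,ds\le\mathfrak q\,\sup_s|\psi(s)|$ to force $\psi\equiv0$ you need $\sup_s|\psi(s)|<\infty$. But $\psi(s)=Y(s)v$ obeys only the growth bound $\norm{Y(s)}\le\exp\bigl(\int_\tau^s\norm{A}+\mathfrak v\bigr)$ on $[\tau,\infty)$, and \textbf{(c5)} merely says that $K(s)h(s)\mathfrak v(s)\norm{Y(s)}$ is integrable --- enough for $W(t)$ to be well defined for each $t$, but not enough for a uniform-in-$t$ bound on $|W(t)v|$ (the factor $1/h(t)$ in the estimate of $\int_t^\infty\norm{X(t,s)Q(s)}\mathfrak v(s)\norm{Y(s)}\,ds$ may blow up). The analogy with Step~4 of Theorem~\ref{167} breaks precisely here: there the quantity $\omega$ was bounded \emph{a priori} because $H-\mathrm{Id}$ and $G-\mathrm{Id}$ are bounded by \textbf{(c2)}, whereas no such boundedness is available for $\psi$. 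So as written, the contraction argument does not go through, and nothing in \textbf{(c1)}--\textbf{(c5)} supplies the missing bound without further work.

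The paper avoids this computation entirely. After obtaining $G(\tau,\cdot)\in C^1$ it observes that $G(\tau,\xi)-\xi$ is bounded (property~ii) of the topological equivalence), hence $|G(\tau,\xi)|\to\infty$ as $|\xi|\to\infty$, and then invokes Plastock's global inversion theorem \cite[Corollary~2.1]{Plastock} to conclude that $G(\tau,\cdot)$ is a $C^1$ diffeomorphism; the formula $\frac{\partial H}{\partial\xi}(\tau,\xi)=\bigl[\frac{\partial G}{\partial\eta}(\tau,H(\tau,\xi))\bigr]^{-1}$ then follows. Thus the paper's shortcut is to trade the hands-on invertibility check for a global inversion theorem fed by coercivity; your approach is more explicit but, in its current form, leaves the key boundedness unjustified.
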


\begin{proof} By Theorem \ref{167} we know they are topologically equivalent. By Lemma \ref{168} we know $\eta\mapsto w^*(0;(\tau,\eta))$ is a $C^1$ map and by classic results of differentiability respect to the initial conditions (see for example Theorem 4.1 in \cite{Hartman}) we know $\eta \mapsto y(0,\tau,\eta)$ is as well a $C^1$ map, which along with the expression (\ref{169}) allows us to conclude $\eta\mapsto G(\tau,\eta)$ is a $C^1$ map.\\


Furthermore, as $G$ is a topological equivalence, then $\xi \mapsto G(\tau,\xi)-\xi$ is bounded, thus $G(\tau,\xi)\to \infty$ when $|\xi|\to \infty$. This fact, along with the previous discussion implies by \cite[Corollary 2.1]{Plastock} that $\xi \mapsto G(\tau,\xi)$ is a $C^1$ diffeomorphism. Moreover, as $G(\tau,H(\tau,\xi))=\xi$, we have
$$\frac{\partial G}{\partial \xi}(\tau,H(\tau,\xi))\frac{\partial H}{\partial \xi}(\tau,\xi)=I,$$
and so $\frac{\partial H}{\partial \xi}(\tau,\xi)=\left[ \frac{\partial G}{\partial \xi}(\tau,H(\tau,\xi))\right]^{-1}$, which completes the proof.
\end{proof}

\begin{corollary}
If \textbf{(c4)} is satisfied with $r=1$. Furthermore, suppose (\ref{89}) admits an exponential dichotomy, \textit{i.e.} $P$ and $Q$ are constant complementary projectors, $K(s)=K>0$ for every $s\in \mathbb{R}^+$ and $h(s)=e^{-\lambda s}$, with $\lambda>0$. Suppose  $\mathfrak{v}(s)=\mathfrak{v}>0$ and $\mathfrak{u}(s)=\mathfrak{u}>0$ for every $s \in \mathbb{R}^+$. Then, if $2K\mathfrak{v}<\lambda$ and $M+ \mathfrak{v}<\lambda$, the systems (\ref{89}) and (\ref{90}) are $C^1$-topologically equivalent on $\mathbb{R}^+$.
\end{corollary}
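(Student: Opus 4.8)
The plan is to show that, under the hypotheses of the corollary, all of \textbf{(c1)}--\textbf{(c5)} hold with $r=1$, and then to invoke Theorem \ref{225}. So the proof is nothing more than five verifications, of which only two carry any content. Condition \textbf{(c1)} is immediate: with the given constant projectors $P,Q$, with $K(\cdot)\equiv K$ and $h(t)=e^{-\lambda t}$, the function $h$ is $C^1$, strictly decreasing, $h(0)=1$ and $h(t)\to 0$, while $h(t)/h(s)=e^{-\lambda(t-s)}$, so the two dichotomy inequalities of \textbf{(c1)} are exactly the definition of an exponential dichotomy. Condition \textbf{(c4)} with $r=1$ is assumed outright.

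For \textbf{(c2)} and \textbf{(c3)} the first thing I would record is the pointwise Green operator bound, obtained from \textbf{(c1)} by splitting into the cases $t\ge s$ and $t<s$:
\[
\norm{\mathcal{G}(t,s)}\le K e^{-\lambda|t-s|}\qquad(t,s\ge 0).
\]
Integrating this uniformly in $t$,
\[
\int_0^\infty\norm{\mathcal{G}(t,s)}\,ds\le K\left(\int_0^t e^{-\lambda(t-s)}\,ds+\int_t^\infty e^{-\lambda(s-t)}\,ds\right)\le \frac{2K}{\lambda}.
\]
Since $\mathfrak{u}(\cdot)\equiv\mathfrak{u}$ and $\mathfrak{v}(\cdot)\equiv\mathfrak{v}$ are constants, this gives $\int_0^\infty\norm{\mathcal{G}(t,s)}\mathfrak{u}\,ds\le 2K\mathfrak{u}/\lambda=:\mathfrak{p}<\infty$, which is \textbf{(c2)}, and $\int_0^\infty\norm{\mathcal{G}(t,s)}\mathfrak{v}\,ds\le 2K\mathfrak{v}/\lambda=:\mathfrak{q}$; the numerical hypothesis $2K\mathfrak{v}<\lambda$ is precisely $\mathfrak{q}<1$, so \textbf{(c3)} holds.

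For \textbf{(c5)}, using $\norm{A(r)}\le M$ together with $K(s)\equiv K$, $h(s)=e^{-\lambda s}$ and $\mathfrak{v}(s)\equiv\mathfrak{v}$, for fixed $\tau\in\mathbb{R}^+$ I would estimate
\[
\int_\tau^\infty K(s)h(s)\mathfrak{v}(s)\exp\!\left(\int_\tau^s \norm{A(r)}+\mathfrak{v}(r)\,dr\right)ds
\le K\mathfrak{v}\,e^{-(M+\mathfrak{v})\tau}\int_\tau^\infty e^{(M+\mathfrak{v}-\lambda)s}\,ds,
\]
and the last integral is finite exactly because $M+\mathfrak{v}<\lambda$, giving the bound $K\mathfrak{v}e^{-\lambda\tau}/(\lambda-M-\mathfrak{v})$. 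Having verified \textbf{(c1)}--\textbf{(c5)} with $r=1$, Theorem \ref{225} applies and yields that (\ref{89}) and (\ref{90}) are $C^1$-topologically equivalent on $\mathbb{R}^+$.

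I do not anticipate any genuine obstacle: everything is routine once the exponential decay of $\mathcal{G}$ is written down. The two points that require a moment of care are that the bound in \textbf{(c2)}--\textbf{(c3)} is uniform in $t$ (it is, since $\int_0^t e^{-\lambda(t-s)}\,ds\le 1/\lambda$ for every $t$), and that in \textbf{(c5)} the exponent $M+\mathfrak{v}-\lambda$ must be strictly negative for the tail integral to converge — which is exactly the role of the second numerical hypothesis $M+\mathfrak{v}<\lambda$.
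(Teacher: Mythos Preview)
Your proposal is correct and follows essentially the same approach as the paper: verify \textbf{(c1)}--\textbf{(c5)} directly from the exponential dichotomy bounds and the constant $\mathfrak{u},\mathfrak{v}$, then invoke Theorem~\ref{225}. The paper's computations for \textbf{(c2)}, \textbf{(c3)}, and \textbf{(c5)} are line-for-line the same as yours (down to the bound $2K\mathfrak{u}/\lambda$, the observation $2K\mathfrak{v}/\lambda<1$, and the factorization $e^{-(M+\mathfrak{v})\tau}\int_\tau^\infty e^{(M+\mathfrak{v}-\lambda)s}\,ds$).
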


\begin{proof} For an arbitrary $t\in \mathbb{R}^+$ we have
\begin{eqnarray*}
    \int_{0}^\infty \norm{\mathcal{G}(t,s)}\mathfrak{u}ds&=&\int_{0}^{t} \norm{X(t,s)P}\mathfrak{u}ds+\int_{t}^\infty \norm{X(t,s)Q}\mathfrak{u}ds\\
    \\
    &\leq&\int_{0}^{t}e^{-\lambda(t-s)}K\mathfrak{u}ds+\int_{t}^\infty e^{-\lambda(s-t)}K\mathfrak{u}ds\\
    \\
    &\leq&K\mathfrak{u}\frac{1-e^{-\lambda  t}}{\lambda}+\frac{K\mathfrak{u}}{\lambda}\leq \frac{2K\mathfrak{u}}{\lambda},
\end{eqnarray*}
thus \textbf{(c2)} is satisfied. Analogously
$$\int_{0}^\infty \norm{\mathcal{G}(t,s)}\mathfrak{v}ds\leq \frac{2K \mathfrak{v}}{\lambda}<1,$$
thus \textbf{(c3)} is verified. Finally, for an arbitrary $\tau \in \mathbb{R}^+$ we have
\begin{eqnarray*}
    \mathlarger{\int_{\tau}^{\infty}}K\mathfrak{v}e^{-\lambda s}\exp\left(\int_\tau^s \norm{A(r)} +\mathfrak{v}dr \right)ds&\leq&\mathlarger{\int_{\tau}^{\infty}}K\mathfrak{v}e^{-\lambda s}\exp\left(\int_\tau^s  M +\mathfrak{v}dr \right)ds\\
    \\&=&\mathlarger{\int_{\tau}^{\infty}}K\mathfrak{v}e^{-\lambda s}e^{(M +\mathfrak{v})(s-\tau)}ds\\
    \\&=&e^{-(M+\mathfrak{v})\tau}K \mathfrak{v}\mathlarger{\int_{\tau}^{\infty}}e^{(M +\mathfrak{v}-\lambda)s}ds<+\infty,
\end{eqnarray*}
so \textbf{(c5)} is satisfied. Applying Theorem \ref{225} the systems are $C^1$-topologically equivalent on $\mathbb{R}^+$.
\end{proof}

\begin{corollary}\label{228}
Suppose (\ref{89}) admits a nonuniform exponential dichotomy, \textit{i.e.} there are two complementary invariant projectors $P(\cdot)$ and $Q(\cdot)$ and constants constanes $C,\lambda, \varepsilon_1>0$ such that
$$\left\{ \begin{array}{lc}
            \norm{X(t,s)P(s)}\leq C
            e^{-\lambda(t-s)+\varepsilon_1 s}, &\forall t \geq s\geq 0 \\
            \\ \norm{X(t,s)Q(s)}\leq C
            e^{\lambda(t-s)+\varepsilon_1 s}, &\forall 0\leq t\leq s.
             \end{array}
   \right.$$
   
Furthermore, suppose that for each $t\in \mathbb{R}^+$ $u\mapsto f(t,u)$ is a $C^1$ map such that $u\mapsto \frac{\partial f}{\partial u}(t,u)$ is a bounded map that satisfies
\begin{equation}\label{227}
    |f(s,u)|\leq \kappa e^{-\varepsilon_0 s}
\end{equation}
and
\begin{equation}\label{226}
    \norm{\frac{\partial f}{\partial u}(s,u)}\leq \nu e^{-\varepsilon_1 s},
\end{equation}
for given $\nu ,\kappa >0$ y $\varepsilon_0 > \varepsilon_1 - \lambda$. Then, if $M<\lambda$, for $\nu>0$ being small enough the systems (\ref{89}) and (\ref{90}) are $C^1$-topologically equivalent on $\mathbb{R}^+.$
\end{corollary}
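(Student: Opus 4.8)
The plan is to deduce this from Theorem~\ref{225} by checking that the five structural hypotheses \textbf{(c1)}--\textbf{(c5)} (with $r=1$ in \textbf{(c4)}) all hold in the present situation. First I would fix the auxiliary data
\[
h(s)=e^{-\lambda s},\qquad K(s)=Ce^{\varepsilon_1 s},\qquad \mathfrak{u}(s)=\kappa e^{-\varepsilon_0 s},\qquad \mathfrak{v}(s)=\nu e^{-\varepsilon_1 s}.
\]
With these choices $h$ is a decreasing $C^1$ map of $[0,\infty[$ onto $]0,1]$ with $h(0)=1$ and $h(t)\to0$ as $t\to\infty$, and $K$ is continuous and nonnegative; rewriting the nonuniform exponential dichotomy as $\norm{X(t,s)P(s)}\le Ce^{\varepsilon_1 s}e^{-\lambda(t-s)}=K(s)h(t)/h(s)$ for $t\ge s$ and $\norm{X(t,s)Q(s)}\le Ce^{\varepsilon_1 s}e^{\lambda(t-s)}=K(s)h(s)/h(t)$ for $t\le s$ yields \textbf{(c1)}. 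The bound $|f(s,u)|\le\mathfrak{u}(s)$ is exactly (\ref{227}); the Lipschitz estimate $|f(s,y)-f(s,\tilde y)|\le\mathfrak{v}(s)|y-\tilde y|$ follows from $f(s,y)-f(s,\tilde y)=\int_0^1\frac{\partial f}{\partial u}(s,\tilde y+\theta(y-\tilde y))(y-\tilde y)\,d\theta$ together with (\ref{226}); and \textbf{(c4)} with $r=1$ is part of the hypotheses.

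Next I would verify \textbf{(c2)} and \textbf{(c3)} by the standard splitting $\int_0^\infty\norm{\mathcal{G}(t,s)}g(s)\,ds=\int_0^t\norm{X(t,s)P(s)}g(s)\,ds+\int_t^\infty\norm{X(t,s)Q(s)}g(s)\,ds$ (with $g=\mathfrak{u}$ for \textbf{(c2)} and $g=\mathfrak{v}$ for \textbf{(c3)}) and substitution of the dichotomy bounds. For \textbf{(c3)} the factor $e^{\varepsilon_1 s}$ coming from $K$ is exactly cancelled by $\mathfrak{v}(s)$, so the stable part is $C\nu e^{-\lambda t}\int_0^t e^{\lambda s}\,ds\le C\nu/\lambda$ and the unstable part is $C\nu e^{\lambda t}\int_t^\infty e^{-\lambda s}\,ds=C\nu/\lambda$; hence $\int_0^\infty\norm{\mathcal{G}(t,s)}\mathfrak{v}(s)\,ds\le 2C\nu/\lambda$, and taking $\nu<\lambda/(2C)$ --- this is precisely what ``$\nu$ small enough'' means --- gives $\mathfrak{q}:=2C\nu/\lambda<1$. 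For \textbf{(c2)} the same splitting yields $C\kappa e^{-\lambda t}\int_0^t e^{(\lambda+\varepsilon_1-\varepsilon_0)s}\,ds+C\kappa e^{\lambda t}\int_t^\infty e^{-(\varepsilon_0+\lambda-\varepsilon_1)s}\,ds$; the assumption $\varepsilon_0>\varepsilon_1-\lambda$ makes the exponent in the tail integral strictly negative so that tail converges, and an elementary computation shows the sum is bounded in $t$ (the borderline case $\varepsilon_0=\lambda+\varepsilon_1$ being absorbed by $\sup_{t\ge0}t\,e^{-\lambda t}<\infty$).

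Then I would check \textbf{(c5)}: here $K(s)h(s)\mathfrak{v}(s)=C\nu e^{-\lambda s}$, while $\norm{A(r)}\le M$ and $\int_\tau^s\mathfrak{v}(r)\,dr\le\nu/\varepsilon_1$ give $\exp\!\left(\int_\tau^s\norm{A(r)}+\mathfrak{v}(r)\,dr\right)\le e^{\nu/\varepsilon_1}e^{M(s-\tau)}$, so the integrand in \textbf{(c5)} is dominated by $C\nu e^{\nu/\varepsilon_1}e^{-M\tau}e^{(M-\lambda)s}$, which is integrable on $[\tau,\infty[$ precisely because $M<\lambda$. With \textbf{(c1)}--\textbf{(c5)} verified, Theorem~\ref{225} applies and gives the $C^1$-topological equivalence of (\ref{89}) and (\ref{90}) on $\mathbb{R}^+$.

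The whole of the argument is in the second and third steps, and the only genuine difficulty is keeping the exponent bookkeeping straight --- for each stable/unstable piece one must isolate the surviving combination of $\lambda,\varepsilon_0,\varepsilon_1,M$ and confirm that every ``unstable'' tail converges and, in \textbf{(c2)}, stays uniformly bounded in $t$; this is where $\varepsilon_0>\varepsilon_1-\lambda$ (supplemented by $\varepsilon_0\ge\varepsilon_1$ for the uniformity) enters, just as $M<\lambda$ enters in \textbf{(c5)} and the smallness of $\nu$ enters in the contraction estimate \textbf{(c3)}. Everything else is supplied by Theorem~\ref{225}.
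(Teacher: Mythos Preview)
Your proof follows essentially the same route as the paper's: fix $K(s)=Ce^{\varepsilon_1 s}$, $h(s)=e^{-\lambda s}$, $\mathfrak{u}(s)=\kappa e^{-\varepsilon_0 s}$, $\mathfrak{v}(s)=\nu e^{-\varepsilon_1 s}$, verify \textbf{(c1)}--\textbf{(c5)}, and invoke Theorem~\ref{225}. Two minor differences are worth recording.

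First, for \textbf{(c5)} the paper uses the cruder bound $\exp\bigl(\int_\tau^s\norm{A(r)}+\mathfrak{v}(r)\,dr\bigr)\le e^{(M+\nu)s}$, which forces the extra smallness condition $\nu<\lambda-M$; your estimate $\int_\tau^s\mathfrak{v}(r)\,dr\le\nu/\varepsilon_1$ is sharper and shows that \textbf{(c5)} follows from $M<\lambda$ alone, with no additional constraint on $\nu$. Second, the paper simply asserts that \textbf{(c2)} ``is satisfied immediately'', whereas your computation isolates the surviving factor $e^{(\varepsilon_1-\varepsilon_0)t}$ in both the stable and unstable pieces and observes that a \emph{uniform} bound in $t$ actually requires $\varepsilon_0\ge\varepsilon_1$, not merely $\varepsilon_0>\varepsilon_1-\lambda$. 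That caution is well placed: as the corollary is stated, the range $\varepsilon_1-\lambda<\varepsilon_0<\varepsilon_1$ would make the bound grow in $t$, so your parenthetical ``supplemented by $\varepsilon_0\ge\varepsilon_1$'' is a genuine correction rather than an oversight.
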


\begin{proof} Condition \textbf{(c1)} is easily verified with  $K(s)=Ce^{\varepsilon_1 s}$ and $h(s)=e^{-\lambda s}$. Note that condition (\ref{226}) implies
$$|f(s,y)-f(s,\Tilde{y})|\leq \nu e^{-\varepsilon_1 s}|y-\Tilde{y}|,$$
hence our general conditions are verified with  $\mathfrak{v}(s)=\nu e^{-\varepsilon_1 s}$ and $\mathfrak{u}(s)=\kappa e^{-\varepsilon_0 s}$. Thus condition \textbf{(c2)} is satisfied inmedietly and, for small enough $\nu$, \textbf{(c3)} is as well, meanwhile \textbf{(c4)} is granted by hypothesis. Denote
$$\Psi_\tau(s)=\exp \left( \int_\tau^s \norm{A(r)}+\mathfrak{v}(r) dr\right).$$

It is easy to see $s\geq \tau\geq t$ implies $\Psi_t(s)\leq \Psi_\tau(s)$, hence for a fixed $\tau\in \mathbb{R}^+$
$$\Psi_\tau(s)\leq \Psi_0(s)\leq e^{(M+\nu)s}.$$

Thus
\begin{eqnarray*}
    \int_{\tau}^\infty K(s)h(s)\mathfrak{v}(s)\Psi_\tau(s)ds&\leq& \int_{0}^\infty C \nu e^{-\lambda s}\Psi_0(s) ds\\
    \\
    &\leq&C\nu \int_{0}^\infty  e^{(M+\nu-\lambda)s}ds,
\end{eqnarray*}
which, as $M<\lambda$, is finite as soon as $\nu$ is small enough, particularly $0<\nu < \lambda-M$. Thus, condition \textbf{(c5)} is verified. Applying Theorem \ref{225} the corollary follows.
\end{proof}

\section{Second class of differentiability for the topological equivalence}

We once again consider the expression (\ref{169}) in order to study the second derivative of the homeomorphism of topological equivalence. Taking in account classic results of differentiability respect to the initial conditions (see for example Theorem 4.1 in \cite{Hartman}), we know the map $\eta\mapsto y(0,\tau,\eta)$ has the same class of differentiability as $u\mapsto f(\tau,u)$ for every  $\tau \in \mathbb{R}^+$ when conditions \textbf{(c1)-(c4)} are satisfied; hence the class of differentiability of the homeomorphism relies on the differentiability of the map $\eta \mapsto w^*(0;(\tau,\eta))$.

\begin{lemma}\label{224}
Suppose conditions \textbf{(c1)-(c5)} hold, with $r=2$ on \textbf{(c4)}. Furthermore, suppose there are functions $\mathfrak{V}:\mathbb{R}^+\to \mathbb{R}^+$ and $\pi_\tau:\mathbb{R}^+\to \mathbb{R}^+$ such that
\begin{equation}\label{221}
    \norm{\frac{\partial^2 f}{\partial u^2}(s,u)}\leq \mathfrak{V}(s)\text{ , for every }s\in \mathbb{R}^+
\end{equation}
and
\begin{equation}\label{222}
    \norm{\frac{\partial^2 y}{\partial \eta^2}(s,\tau,\eta)}\leq \pi_\tau(s)\text{ , for every }s\geq \tau.
\end{equation}

If for every fixed $\tau\in \mathbb{R}^+$ the previous functions verify
    \begin{equation}\label{223}
    \mathlarger{\int_{\tau}^{\infty}}\left(K(s)h(s)\left\{\pi_\tau(s)\mathfrak{v}(s)+\mathfrak{V}(s)\left[\exp\left(\int_\tau^s \norm{A(r)}+\mathfrak{v}(r)dr\right)\right]^2 \right\}\right)ds<+\infty,
    \end{equation}
then $\eta\mapsto w^*(0;(\tau,\eta))$ is a $C^2$ map.
\end{lemma}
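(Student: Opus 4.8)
The plan is to adapt the scheme of Lemma \ref{168} one derivative higher, starting from the explicit formula for the first derivative supplied by Corollary \ref{220},
\[
\frac{\partial w^*(0;(\tau,\eta))}{\partial \eta}=-\int_{0}^\infty \mathcal{G}(0,s)\frac{\partial f}{\partial u}(s,y(s,\tau,\eta))\frac{\partial y}{\partial \eta}(s,\tau,\eta)\,ds .
\]
Differentiating formally under the integral sign and applying the product rule, the natural candidate for the second derivative is
\[
\Lambda(\tau,\eta):=-\int_{0}^\infty \mathcal{G}(0,s)\left[\frac{\partial^2 f}{\partial u^2}(s,y(s,\tau,\eta))\left(\frac{\partial y}{\partial \eta}(s,\tau,\eta)\right)^{2}+\frac{\partial f}{\partial u}(s,y(s,\tau,\eta))\frac{\partial^2 y}{\partial \eta^2}(s,\tau,\eta)\right]ds .
\]
Fixing $\eta$ and a sequence $\delta_n\to 0$, I would define $\psi_n(s)$ to be $\mathcal{G}(0,s)$ times the difference quotient at $\eta$ of the map $\eta\mapsto \frac{\partial f}{\partial u}(s,y(s,\tau,\eta))\frac{\partial y}{\partial\eta}(s,\tau,\eta)$ minus its putative derivative (the bracket in $\Lambda$), exactly as $\varphi_n$ was built in Lemma \ref{168}, so that $\frac{1}{|\delta_n|}\big(\frac{\partial w^*}{\partial\eta}(0;(\tau,\eta+\delta_n))-\frac{\partial w^*}{\partial\eta}(0;(\tau,\eta))-\Lambda(\tau,\eta)\delta_n\big)=-\int_0^\infty\psi_n(s)\,ds$; the goal is then to pass the limit inside the integral.

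Pointwise one has $\psi_n(s)\to 0$: with $r=2$ in \textbf{(c4)} the map $u\mapsto f(s,u)$ is $C^2$, so by the classical theorems on smooth dependence on initial data (Theorem 4.1 in \cite{Hartman}) $\eta\mapsto y(s,\tau,\eta)$ is $C^2$ with $\partial^2 y/\partial\eta^2$ solving the second variational equation, whence $\eta\mapsto \frac{\partial f}{\partial u}(s,y)\frac{\partial y}{\partial\eta}$ is differentiable with derivative the said bracket, and the remainder quotient vanishes. For the dominating function I would split the numerator of $\psi_n$ into the three pieces produced by the product rule and estimate each using ingredients already at hand: $\norm{\mathcal{G}(0,s)}\le K(s)h(s)$ from \textbf{(c1)} (using $h(0)=1$); $\norm{\frac{\partial f}{\partial u}(s,\cdot)}\le \mathfrak{v}(s)$, the bound $|y(s,\tau,\eta+\delta_n)-y(s,\tau,\eta)|/|\delta_n|\le \exp(\int_\tau^s\norm{A(r)}+\mathfrak{v}(r)dr)$, and $\norm{\frac{\partial y}{\partial\eta}(s,\tau,\eta)}\le\exp(\int_\tau^s\norm{A(r)}+\mathfrak{v}(r)dr)$, all obtained in the proof of Lemma \ref{168}; the new bound $\norm{\frac{\partial^2 f}{\partial u^2}}\le \mathfrak{V}(s)$ from (\ref{221}), which together with the mean value inequality controls $\norm{\frac{\partial f}{\partial u}(s,y(s,\tau,\eta+\delta_n))-\frac{\partial f}{\partial u}(s,y(s,\tau,\eta))}$; and $\norm{\frac{\partial^2 y}{\partial\eta^2}}\le \pi_\tau(s)$ from (\ref{222}), which controls both the subtracted term and the difference quotient $\norm{\frac{\partial y}{\partial\eta}(s,\tau,\eta+\delta_n)-\frac{\partial y}{\partial\eta}(s,\tau,\eta)}/|\delta_n|$ — here using that the Gronwall estimate bounding $\partial^2 y/\partial\eta^2$ depends only on $\norm{A}$, $\mathfrak{v}$, $\mathfrak{V}$, not on the base point, hence holds uniformly along the segment $[\eta,\eta+\delta_n]$. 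Assembling these, for $s\ge\tau$ the quantity $|\psi_n(s)|$ is bounded by a fixed constant times $K(s)h(s)\{\pi_\tau(s)\mathfrak{v}(s)+\mathfrak{V}(s)[\exp(\int_\tau^s\norm{A(r)}+\mathfrak{v}(r)dr)]^2\}$, which is integrable on $[\tau,\infty[$ precisely by hypothesis (\ref{223}); on the compact $[0,\tau]$ I would repeat the uniform-convergence-on-a-compact argument of Lemma \ref{168} (the relevant difference quotients are continuous and converge pointwise to a continuous limit, hence uniformly), so $|\psi_n(s)|$ is eventually dominated there by a continuous, hence bounded, hence integrable function. Splicing the two pieces gives a global integrable dominating function, and Lebesgue's dominated convergence theorem yields $-\int_0^\infty\psi_n\to 0$, i.e.\ $\eta\mapsto w^*(0;(\tau,\eta))$ is twice differentiable with $\partial^2 w^*/\partial\eta^2=\Lambda(\tau,\eta)$.

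To upgrade twice differentiability to $C^2$ I would show $\eta\mapsto \Lambda(\tau,\eta)$ is continuous by a further dominated-convergence argument, as in Steps 5--6 of the proof of Theorem \ref{167}: the integrand of $\Lambda$ is continuous in $\eta$ (by continuity of $f$ and its first two $u$-derivatives and of $\eta\mapsto y,\ \partial y/\partial\eta,\ \partial^2 y/\partial\eta^2$) and, along any sequence $\eta_n\to\eta$, is dominated by an integrable function built from the same estimates, so $\Lambda(\tau,\eta_n)\to\Lambda(\tau,\eta)$. I expect the main obstacle to be the construction of the dominating function for $\psi_n$, and within it specifically the uniform-in-$n$ control of $\norm{\frac{\partial y}{\partial\eta}(s,\tau,\eta+\delta_n)-\frac{\partial y}{\partial\eta}(s,\tau,\eta)}/|\delta_n|$: one cannot merely invoke (\ref{222}) at the base point but must observe that the Gronwall bound for the second variational equation is uniform over a neighbourhood of $\eta$ because its constants do not depend on the base point; once that is in place, the rest is a bookkeeping exercise matching the three product-rule terms against the two terms under the integral in (\ref{223}).
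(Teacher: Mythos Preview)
Your proposal is correct and follows essentially the same route as the paper's proof: both start from Corollary~\ref{220}, form the difference-quotient sequence $\psi_n(s)$ against the candidate $\Lambda(\tau,\eta)$, establish pointwise convergence to zero via the $C^2$ regularity in \textbf{(c4)}, build the dominating function piecewise (on $[\tau,\infty[$ from the estimate that produces precisely the integrand in (\ref{223}), and on $[0,\tau]$ via the uniform-convergence-on-a-compact argument), and conclude by dominated convergence. The only differences are cosmetic: the paper organises the estimate of the numerator by first deducing Lipschitz bounds for $\partial f/\partial u$ and $\partial y/\partial\eta$ from (\ref{221})--(\ref{222}) and then bounding $\mathfrak{F}_{n,\tau,\eta}$ and $\mathcal{O}_{\tau,\eta}$ separately, whereas you split via the product rule directly; and you add an explicit dominated-convergence step for the continuity of $\Lambda$, which the paper leaves implicit.
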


\begin{proof} We follow the same strategy as in Lemma \ref{168}. In Corollary \ref{220} we established 
$$ \frac{\partial w^*(0;(\tau,\eta))}{\partial \eta}=-\int_{0}^\infty \mathcal{G}(0,s)\frac{\partial f}{\partial u}(s,y(s,\tau,\eta))\frac{\partial y}{\partial \eta}(s,\tau,\eta),$$
for every fixed $\tau\in \mathbb{R}^+$, which is granted by conditions \textbf{(c1)-(c5)}. Denote
$$\mathcal{O}_{\tau,\eta}(s)=\frac{\partial  f}{\partial u}(s,y(s,\tau,\eta))\frac{\partial^2 y}{\partial \eta^2}(s,\tau,\eta)+\frac{\partial ^2 f}{\partial u^2}(s,y(s,\tau,\eta))\left( \frac{\partial y}{\partial \eta}(s,\tau,\eta)\right)^2.$$

Fix $\eta\in \mathbb{R}^d$ and let $(\delta_n)_{n\in \mathbb{N}}\subset \mathbb{R}^d$ be a properly convergence to zero sequence. Choose $\tau\in \mathbb{R}^+$ and define the sequence of functions $(\mathfrak{F}_{n,\tau,\eta})_{n\in \mathbb{N}}$ over $\mathbb{R}^+$ given by
$$\mathfrak{F}_{n,\tau,\eta}(s)=\frac{\partial f}{\partial u}(s,y(s,\tau,\eta+\delta_n))\frac{\partial y}{\partial \eta}(s,\tau,\eta+\delta_n)-\frac{\partial f}{\partial u}(s,y(s,\tau,\eta))\frac{\partial y}{\partial \eta}(s,\tau,\eta).$$

We now define $(\psi_n)_{n\in \mathbb{N}}$ by
$$\psi_n(s)=\mathcal{G}(0,s)\frac{\mathfrak{F}_{n,\tau,\eta}(s)-\mathcal{O}_{\tau,\eta}(s)\delta_n}{|\delta_n|},$$
which by \textbf{(c4)} verifies
$$\lim_{n\to\infty}\psi_n(j)=0.$$

On the other hand, it is easy to deduce from (\ref{221}) and (\ref{222}) that 
\begin{eqnarray*}
    \norm{\frac{\partial f}{\partial u}(s,u)-\frac{\partial f}{\partial u}(s,\tilde{u})}&\leq&\mathfrak{V}(s)|u-\tilde{u}|\text{ , for every }s\in \mathbb{R}^+
\end{eqnarray*}
and
\begin{eqnarray*}
    \norm{\frac{\partial y}{\partial \eta}(s,\tau,\eta)-\frac{\partial y}{\partial \eta}(s,\tau,\tilde{\eta})}&\leq&\pi_\tau(s)|\eta-\tilde{\eta}|\text{ , for }s\geq \tau.
\end{eqnarray*}

Thus, for $s\geq \tau$
\begin{eqnarray*}
    \norm{\mathcal{O}_{\tau,\eta}(s)}&\leq &\norm{\frac{\partial^2 y}{\partial \eta^2}(s,\tau,\eta)}\norm{\frac{\partial  f}{\partial u}(s,y(s,\tau,\eta))}+\norm{\frac{\partial ^2 f}{\partial u^2}(s,y(s,\tau,\eta))} \norm{\frac{\partial y}{\partial \eta}(s,\tau,\eta)}^2\\
    \\&\leq&\pi_\tau(s)\mathfrak{v}(s)+\mathfrak{V}(s)\left[\exp\left(\int_\tau^s \norm{A(r)}+\mathfrak{v}(r)dr\right)\right]^2,
\end{eqnarray*}
and
\begin{eqnarray*}
   \norm{\mathfrak{F}_{n,\tau,\eta}(s)}&\leq&\norm{\frac{\partial f}{\partial u}(s,y(s,\tau,\eta+\delta_n))\frac{\partial y}{\partial \eta}(s,\tau,\eta+\delta_n)-\frac{\partial f}{\partial u}(s,y(s,\tau,\eta+\delta_n))\frac{\partial y}{\partial \eta}(s,\tau,\eta)}\\
    \\&&+\norm{\frac{\partial f}{\partial u}(s,y(s,\tau,\eta+\delta_n))\frac{\partial y}{\partial \eta}(s,\tau,\eta)-\frac{\partial f}{\partial u}(s,y(s,\tau,\eta))\frac{\partial y}{\partial \eta}(s,\tau,\eta)}\\
    \\&\leq&\norm{\frac{\partial f}{\partial u}(s,y(s,\tau,\eta+\delta_n))}\norm{\frac{\partial y}{\partial \eta}(s,\tau,\eta+\delta_n)-\frac{\partial y}{\partial \eta}(s,\tau,\eta)}\\
    \\&&+\norm{\frac{\partial f}{\partial u}(s,y(s,\tau,\eta+\delta_n))-\frac{\partial f}{\partial u}(s,y(s,\tau,\eta))}\norm{\frac{\partial y}{\partial \eta}(s,\tau,\eta)}\\
    \\&\leq&\mathfrak{v}(s)\pi_\tau(s)|\delta_n|+\mathfrak{V}(s)|y(s,\tau,\eta+\delta_n)-y(s,\tau,\eta)|\exp\left(\int_\tau^s \norm{A(r)}+\mathfrak{v}(r)dr\right)\\
    \\&\leq&\left(\mathfrak{v}(s)\pi_\tau(s)+\mathfrak{V}(s)\left[\exp\left(\int_\tau^s \norm{A(r)}+\mathfrak{v}(r)dr\right)\right]^2 \right)|\delta_n|.
\end{eqnarray*}

Now, consider the sequence of continuous functions $s \mapsto\dfrac{\norm{\frac{\partial y}{\partial \eta}(s,\tau,\eta)-\frac{\partial y}{\partial \eta}(s,\tau,\eta+\delta_n)}}{|\delta_n|} $ defined on $[0,\tau]$. As they converge pointwise to the  continuous function $s\mapsto \norm{\frac{\partial^2 y}{\partial \eta^2}(s,\tau,\eta)}$ on the compact domain $[0,\tau]$ the convergence is uniform. Thus, there is $\hat{n}\in \mathbb{N}$ such that
$$\dfrac{\norm{\frac{\partial y}{\partial \eta}(s,\tau,\eta)-\frac{\partial y}{\partial \eta}(s,\tau,\eta+\delta_n)}}{|\delta_n|}\leq\norm{\frac{\partial^2 y}{\partial \eta^2}(s,\tau,\eta)}+2 \text{ for every }0\leq s\leq\tau; n\geq \hat{n}.$$

Thus, for $0\leq s\leq \tau$ and $n\geq \hat{n}$
\begin{eqnarray*}
   \norm{\mathfrak{F}_{n,\tau,\eta}(s)}&\leq&\left(\mathfrak{v}(s)\left(\norm{\frac{\partial^2 y}{\partial \eta^2}(s,\tau,\eta)}+2 \right)+\mathfrak{V}(s)\norm{\frac{\partial y}{\partial \eta}(s,\tau,\eta)}^2\right)|\delta_n|,
\end{eqnarray*}
while for $s\in [0,\tau]$
\begin{eqnarray*}
    \norm{\mathcal{O}_{\tau,\eta}(s)}&\leq& \norm{\frac{\partial^2 y}{\partial \eta^2}(s,\tau,\eta)}\mathfrak{v}(s)+\mathfrak{V}(s)\norm{ \frac{\partial y}{\partial \eta}(s,\tau,\eta)}^2.
\end{eqnarray*}

Let us define
 \begin{equation*}
     \mathscr{F}_\tau(s):=\left\{ \begin{array}{lc}
            2\norm{\mathcal{G}(0,s)}\left(\mathfrak{v}(s)\left(\norm{\frac{\partial^2 y}{\partial \eta^2}(s,\tau,\eta)}+1\right)+\mathfrak{V}(s)\norm{ \frac{\partial y}{\partial \eta}(s,\tau,\eta)}^2\right) & ,0\leq s\leq \tau \\
            \\ 2\norm{\mathcal{G}(0,s)}\left(\mathfrak{v}(s)\pi_\tau(s)+\mathfrak{V}(s)\left[\exp\left(\int_\tau^s \norm{A(r)}+\mathfrak{v}(r)dr\right)\right]^2\right) & ,s\geq \tau.
             \end{array}
   \right.
 \end{equation*}

It is easy to see $\norm{\psi_n(s)}\leq \mathscr{F}_\tau(s)$ for every  $s\in \mathbb{R}^+$ and $n\geq \hat{n}$. Now, using (\ref{223}) we have
\begin{eqnarray*}
\int_{0}^{\infty}\mathscr{F}_\tau(s)ds&\leq& 2\int_{0}^{\tau}\norm{\mathcal{G}(0,s)}\left(\mathfrak{v}(s)\left(\norm{\frac{\partial^2 y}{\partial \eta^2}(s,\tau,\eta)}+1\right)+\mathfrak{V}(s)\norm{ \frac{\partial y}{\partial \eta}(s,\tau,\eta)}^2\right)ds\\ 
&&+2\int_{\tau}^{\infty}K(s)h(s)\left(\mathfrak{v}(s)\pi_\tau(s)+\mathfrak{V}(s)\left[\exp\left(\int_\tau^s \norm{A(r)}+\mathfrak{v}(r)dr\right)\right]^2\right)ds<+\infty.
\end{eqnarray*}

Finally, using Lebesgue's dominated convergence Theorem we obtain
\begin{eqnarray*}
\lim_{n\to \infty}\frac{ \frac{\partial w^*}{\partial \eta}(0;(\tau,\eta+\delta_n))-\frac{\partial w^*}{\partial \eta}(0;(\tau,\eta))+\left[ \int_{0}^\infty \mathcal{G}(0,s)\mathcal{O}_{\tau,\eta}(s)ds\right]\delta_n}{|\delta_n|} &=&-
\lim_{n\to \infty}\int_{0}^\infty\psi_n(s)ds\\
&=&-\int_{0}^\infty \left(\lim_{n\to\infty}\psi_n(s)\right)=0,
\end{eqnarray*}
which implies $\eta\mapsto \frac{\partial w^*}{\partial \eta}(0;(\tau,\eta))$ is differentiable, hence $\eta\mapsto w^*(0;(\tau,\eta))$ is a $C^2$ map.
\end{proof}

\begin{corollary}
If conditions from Lemma hold \ref{224}, then for every fixed $\tau\in \mathbb{R}^+$ 
$$ \frac{\partial^2 w^*(0;(\tau,\eta))}{\partial \eta^2}=-\int_{0}^\infty \mathcal{G}(0,s)\left[\frac{\partial f}{\partial u}(s,y(s,\tau,\eta))\frac{\partial^2 y}{\partial \eta^2}(s,\tau,\eta)+\frac{\partial ^2 f}{\partial u^2}(s,y(s,\tau,\eta))\left( \frac{\partial y}{\partial \eta}(s,\tau,\eta)\right)^2\right]ds.$$
\end{corollary}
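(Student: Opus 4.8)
The plan is to read the formula off directly from the dominated-convergence computation already carried out in the proof of Lemma~\ref{224}, exactly as Corollary~\ref{220} was extracted from the proof of Lemma~\ref{168}. Under the hypotheses of Lemma~\ref{224} we know that $\eta\mapsto w^*(0;(\tau,\eta))$ is of class $C^2$, so $\partial^2 w^*(0;(\tau,\eta))/\partial\eta^2$ exists; only its identification remains. Starting from the first-order expression
$$\frac{\partial w^*}{\partial \eta}(0;(\tau,\eta))=-\int_{0}^\infty \mathcal{G}(0,s)\frac{\partial f}{\partial u}(s,y(s,\tau,\eta))\frac{\partial y}{\partial \eta}(s,\tau,\eta)\,ds$$
of Corollary~\ref{220}, I would differentiate under the integral sign. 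The formal $\eta$-derivative of the integrand, obtained from the product and chain rules together with the differentiability of $\eta\mapsto y(s,\tau,\eta)$ granted by \textbf{(c4)} (Theorem 4.1 in \cite{Hartman}), is precisely $\mathcal{G}(0,s)\,\mathcal{O}_{\tau,\eta}(s)$ with
$$\mathcal{O}_{\tau,\eta}(s)=\frac{\partial  f}{\partial u}(s,y(s,\tau,\eta))\frac{\partial^2 y}{\partial \eta^2}(s,\tau,\eta)+\frac{\partial ^2 f}{\partial u^2}(s,y(s,\tau,\eta))\left( \frac{\partial y}{\partial \eta}(s,\tau,\eta)\right)^2$$
as in the proof of Lemma~\ref{224}.

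Thus the candidate value is $-\int_0^\infty \mathcal{G}(0,s)\mathcal{O}_{\tau,\eta}(s)\,ds$. To legitimise the interchange of limit and integral I would simply invoke the estimate already established in the proof of Lemma~\ref{224}: for an arbitrary sequence $(\delta_n)$ tending to zero, the difference quotients $\psi_n$ of $\partial w^*/\partial\eta$ are dominated by the integrable function $\mathscr{F}_\tau$, integrability being a consequence of (\ref{223}) together with the bounds $\norm{\mathcal{G}(0,s)}\le K(s)h(s)$ and $\norm{\frac{\partial y}{\partial\eta}(s,\tau,\eta)}\le\exp\left(\int_\tau^s\norm{A(r)}+\mathfrak{v}(r)dr\right)$ (and, on $[0,\tau]$, the finiteness of the suprema of $\norm{\partial y/\partial\eta}$ and $\norm{\partial^2 y/\partial\eta^2}$ by continuity). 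The final display of that proof gives
$$\lim_{n\to\infty}\frac{\frac{\partial w^*}{\partial \eta}(0;(\tau,\eta+\delta_n))-\frac{\partial w^*}{\partial \eta}(0;(\tau,\eta))+\left[\int_0^\infty \mathcal{G}(0,s)\mathcal{O}_{\tau,\eta}(s)\,ds\right]\delta_n}{|\delta_n|}=0,$$
which, since $(\delta_n)$ is arbitrary, is exactly the statement that $\partial^2 w^*(0;(\tau,\eta))/\partial\eta^2=-\int_0^\infty \mathcal{G}(0,s)\mathcal{O}_{\tau,\eta}(s)\,ds$; expanding $\mathcal{O}_{\tau,\eta}$ yields the claimed identity.

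I do not expect a genuine obstacle: the corollary is pure bookkeeping on top of the proof of Lemma~\ref{224}. The only point deserving an explicit word is that the right-hand side is a well-defined element of $\mathcal{M}_d(\mathbb{R})$, i.e. that $s\mapsto \mathcal{G}(0,s)\mathcal{O}_{\tau,\eta}(s)$ is absolutely integrable on all of $\mathbb{R}^+$; this is covered by the two-region bound on $\norm{\mathcal{O}_{\tau,\eta}(s)}$ derived in the proof of Lemma~\ref{224} (using (\ref{221}), (\ref{222}) and the exponential bound on $\partial y/\partial\eta$ for $s\ge\tau$, continuity for $0\le s\le\tau$) combined with hypothesis (\ref{223}).
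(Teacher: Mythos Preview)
Your proposal is correct and mirrors the paper's approach exactly: the corollary is stated without proof in the paper because it is read off directly from the final dominated-convergence display in the proof of Lemma~\ref{224}, just as Corollary~\ref{220} was read off from Lemma~\ref{168}. Your remarks on the absolute integrability of $s\mapsto\mathcal{G}(0,s)\mathcal{O}_{\tau,\eta}(s)$ are a harmless elaboration of what is already implicit in the bound $\norm{\psi_n(s)}\le\mathscr{F}_\tau(s)$ and the finiteness of $\int_0^\infty\mathscr{F}_\tau(s)\,ds$ established there.
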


\begin{theorem}\label{235}
If conditions \textbf{(c1)-(c5)} hold, with $r=2$ on \textbf{(c4)}, and conditions from Lemma  \ref{224} are satisfied, then (\ref{89}) and (\ref{90}) are $C^2$-topologically equivalent on $\mathbb{R}^+$.
\end{theorem}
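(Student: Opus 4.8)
The plan is to leverage the representation \eqref{169}, namely $G(\tau,\eta)=X(\tau,0)\{y(0,\tau,\eta)+w^*(0;(\tau,\eta))\}$, exactly as was done in the proof of Theorem \ref{225}, but now upgrading the regularity from $C^1$ to $C^2$. First I would invoke Theorem \ref{167} to get that the systems are topologically equivalent on $\mathbb{R}^+$, so that $u\mapsto H(\tau,u)$ is a homeomorphism for each fixed $\tau$. Next, since \textbf{(c4)} holds with $r=2$, classic results on differentiability with respect to initial conditions (Theorem 4.1 in \cite{Hartman}) show that $\eta\mapsto y(0,\tau,\eta)$ is a $C^2$ map for every $\tau\in\mathbb{R}^+$. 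Then Lemma \ref{224}, whose hypotheses are precisely those assumed here, gives that $\eta\mapsto w^*(0;(\tau,\eta))$ is a $C^2$ map. Combining these two facts with the linear (hence smooth) action of $X(\tau,0)$ in \eqref{169}, I conclude that $\eta\mapsto G(\tau,\eta)$ is a $C^2$ map for each fixed $\tau$.

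It then remains to promote ``$G(\tau,\cdot)$ is a $C^2$ map'' to ``$G(\tau,\cdot)$ is a $C^2$ diffeomorphism with $C^2$ inverse $H(\tau,\cdot)$.'' Here I would argue exactly as in Theorem \ref{225}: because $G$ is a topological equivalence, $\xi\mapsto G(\tau,\xi)-\xi$ is bounded (condition ii of Definition \ref{208}), hence $|G(\tau,\xi)|\to\infty$ as $|\xi|\to\infty$, so $G(\tau,\cdot)$ is a proper local-to-global diffeomorphism candidate; by \cite[Corollary 2.1]{Plastock} a $C^1$ local diffeomorphism of $\mathbb{R}^d$ that is proper in this sense is a global $C^1$ diffeomorphism. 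Since $G(\tau,\cdot)$ is in fact $C^2$, its inverse $H(\tau,\cdot)$ is automatically $C^2$ by the inverse function theorem — the identity $G(\tau,H(\tau,\xi))=\xi$ gives $\tfrac{\partial H}{\partial\xi}(\tau,\xi)=\bigl[\tfrac{\partial G}{\partial\xi}(\tau,H(\tau,\xi))\bigr]^{-1}$, and matrix inversion is smooth on invertible matrices, so one further differentiation shows $\tfrac{\partial H}{\partial\xi}$ is $C^1$. By definition this means the systems are $C^2$-topologically equivalent on $\mathbb{R}^+$.

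The only genuinely substantive content has already been isolated into Lemma \ref{224}; the present theorem is essentially an assembly step. The point I would be most careful about is the passage from invertibility of the Jacobian of $G$ to $C^2$ regularity of $H$: one must check that $\tfrac{\partial G}{\partial\xi}(\tau,\cdot)$ is everywhere invertible (which is what \cite[Corollary 2.1]{Plastock} and the bijectivity established in Step 4 of Theorem \ref{167} together guarantee, since a global $C^1$ diffeomorphism has everywhere-nonsingular derivative) before applying the inverse function theorem pointwise and then differentiating the resulting formula once more to capture the second-order regularity of $H$. No new estimates are needed beyond those already available.

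\begin{proof}
By Theorem \ref{167} the systems \eqref{89} and \eqref{90} are topologically equivalent on $\mathbb{R}^+$. Since \textbf{(c4)} holds with $r=2$, classic results on differentiability with respect to initial conditions (see for example Theorem 4.1 in \cite{Hartman}) imply that $\eta\mapsto y(0,\tau,\eta)$ is a $C^2$ map for every fixed $\tau\in\mathbb{R}^+$. By Lemma \ref{224}, the map $\eta\mapsto w^*(0;(\tau,\eta))$ is of class $C^2$ for every fixed $\tau\in\mathbb{R}^+$. Hence, using the expression \eqref{169},
$$G(\tau,\eta)=X(\tau,0)\left\{y(0,\tau,\eta)+w^*(0;(\tau,\eta))\right\},$$
we conclude that $\eta\mapsto G(\tau,\eta)$ is a $C^2$ map for every fixed $\tau\in\mathbb{R}^+$.

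As $G$ is a topological equivalence, $\xi\mapsto G(\tau,\xi)-\xi$ is bounded, so $|G(\tau,\xi)|\to\infty$ as $|\xi|\to\infty$. This, together with the fact that $G(\tau,\cdot)$ is a $C^1$ map, implies by \cite[Corollary 2.1]{Plastock} that $\xi\mapsto G(\tau,\xi)$ is a $C^1$ diffeomorphism of $\mathbb{R}^d$; in particular $\frac{\partial G}{\partial\xi}(\tau,\xi)$ is invertible for every $\xi\in\mathbb{R}^d$. Since in fact $G(\tau,\cdot)$ is $C^2$, the inverse function theorem yields that its inverse, which by Step 4 of the proof of Theorem \ref{167} is precisely $H(\tau,\cdot)$, is a $C^2$ map; differentiating the identity $G(\tau,H(\tau,\xi))=\xi$ gives
$$\frac{\partial H}{\partial\xi}(\tau,\xi)=\left[\frac{\partial G}{\partial\xi}(\tau,H(\tau,\xi))\right]^{-1},$$
and since matrix inversion is smooth on the set of invertible matrices and both $\frac{\partial G}{\partial\xi}(\tau,\cdot)$ and $H(\tau,\cdot)$ are $C^1$, the right-hand side is a $C^1$ map of $\xi$, so $H(\tau,\cdot)$ is indeed of class $C^2$. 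Therefore $u\mapsto H(t,u)$ is a $C^2$ diffeomorphism for every fixed $t\geq 0$, which means the systems \eqref{89} and \eqref{90} are $C^2$-topologically equivalent on $\mathbb{R}^+$.
\end{proof}
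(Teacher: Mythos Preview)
Your proof is correct and follows exactly the approach the paper intends: the paper simply remarks that Theorem~\ref{235} ``follows easily in the same fashion as the proof of Theorem~\ref{225},'' and you have faithfully reproduced that argument, upgrading the regularity via Lemma~\ref{224} and the $C^2$ dependence of $y(0,\tau,\eta)$ on $\eta$, and then invoking \cite[Corollary 2.1]{Plastock} together with the inverse function theorem to transfer $C^2$ smoothness from $G$ to $H$.
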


Note that Theorem \ref{235} follows easily in the same fashion as the proof of Theorem \ref{225}. Now we proceed to study some technical results in order to establish a concrete example of above Theorem.

\begin{lemma}\label{232}
If conditions \textbf{(c1)-(c4)} hold, with $r=2$ on \textbf{(c4)}, then
$$\frac{\partial}{\partial s}\frac{\partial y}{\partial \eta}(s,\tau,\eta)=\frac{\partial}{\partial \eta}\frac{\partial y}{\partial s}(s,\tau,\eta).$$
\end{lemma}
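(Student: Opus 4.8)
The plan is to avoid appealing to Schwarz's theorem on the equality of mixed partials — which is not directly available here, since $A(\cdot)$ is only assumed continuous, so that $s\mapsto y(s,\tau,\eta)$ is merely $C^1$ and not $C^2$ in its first argument — and instead to compute both sides explicitly using the differential equation (\ref{90}) that $y$ satisfies.

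First I would invoke the classical theorem on differentiable dependence on initial conditions (Theorem 4.1 in \cite{Hartman}), which applies under \textbf{(c1)}--\textbf{(c4)} because $u\mapsto f(s,u)$ is of class $C^2$: for each fixed $\tau$, the map $\eta\mapsto y(s,\tau,\eta)$ is $C^2$, the matrix $\Phi(s):=\frac{\partial y}{\partial\eta}(s,\tau,\eta)$ is well defined for $s\ge\tau$, and it is the solution of the first variational equation
$$\dot\Phi(s)=\Bigl[A(s)+\frac{\partial f}{\partial u}\bigl(s,y(s,\tau,\eta)\bigr)\Bigr]\Phi(s),\qquad \Phi(\tau)=I.$$
Since the right-hand side is continuous in $s$, $\Phi$ is $C^1$ in $s$; hence $\frac{\partial}{\partial s}\frac{\partial y}{\partial\eta}(s,\tau,\eta)$ exists and equals $\bigl[A(s)+\frac{\partial f}{\partial u}(s,y(s,\tau,\eta))\bigr]\Phi(s)$.

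Next I would start from the identity $\frac{\partial y}{\partial s}(s,\tau,\eta)=A(s)\,y(s,\tau,\eta)+f\bigl(s,y(s,\tau,\eta)\bigr)$, valid for every $(s,\eta)$ with $s\ge\tau$ because $t\mapsto y(t,\tau,\eta)$ solves (\ref{90}). For each fixed $s$ the right-hand side is a $C^1$ function of $\eta$, being the composition of the $C^1$ map $\eta\mapsto y(s,\tau,\eta)$ with the fixed linear map $A(s)$ and the $C^1$ map $u\mapsto f(s,u)$; differentiating in $\eta$ and applying the chain rule gives
$$\frac{\partial}{\partial\eta}\frac{\partial y}{\partial s}(s,\tau,\eta)=A(s)\,\frac{\partial y}{\partial\eta}(s,\tau,\eta)+\frac{\partial f}{\partial u}\bigl(s,y(s,\tau,\eta)\bigr)\frac{\partial y}{\partial\eta}(s,\tau,\eta)=\Bigl[A(s)+\frac{\partial f}{\partial u}\bigl(s,y(s,\tau,\eta)\bigr)\Bigr]\Phi(s).$$
Comparing the two displayed expressions yields the claimed equality. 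The only delicate point — and the reason the lemma is not an immediate corollary of Schwarz's theorem — is establishing that $\Phi=\partial y/\partial\eta$ actually satisfies the variational equation and is $C^1$ in $s$; this is exactly what the classical differentiable-dependence result provides under \textbf{(c4)}, and it requires no smoothness of $A$ beyond continuity.
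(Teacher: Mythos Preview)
Your proposal is correct and follows essentially the same route as the paper's proof: both compute each mixed partial explicitly and show they coincide with $\bigl[A(s)+\frac{\partial f}{\partial u}(s,y(s,\tau,\eta))\bigr]\frac{\partial y}{\partial\eta}(s,\tau,\eta)$. The only difference is cosmetic --- you invoke the variational equation for $\Phi=\partial y/\partial\eta$ as a direct consequence of Hartman's differentiable-dependence theorem, whereas the paper derives that same equation by differentiating the variation-of-constants representation $y(s,\tau,\eta)=X(s,\tau)\eta+\int_\tau^s X(s,r)f(r,y(r,\tau,\eta))\,dr$ under the integral sign via dominated convergence.
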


\begin{proof}
Note that
\begin{eqnarray*}
    \frac{\partial}{\partial s}\frac{\partial y}{\partial \eta}(s,\tau,\eta)&=&\frac{\partial}{\partial s}\left( X(s,\tau)+\frac{\partial }{\partial \eta}\int_\tau^s X(s,r) f(r,y(r,\tau,\eta))dr\right)\\
    \\&=&\frac{\partial}{\partial s}\left( X(s,\tau)+\int_\tau^s X(s,r)\frac{\partial f}{\partial u}(r,y(r,\tau,\eta))\frac{\partial y}{\partial \eta}(r,\tau,\eta)dr\right)\\
    \\&=&A(s)\left(X(s,\tau)+\int_\tau^s X(s,r)\frac{\partial f}{\partial u}(r,y(r,\tau,\eta))\frac{\partial y}{\partial \eta}(r,\tau,\eta)dr\right)+\frac{\partial f}{\partial u}(s,y(s,\tau,\eta))\frac{\partial y}{\partial \eta}(s,\tau,\eta)\\
    \\&=&A(s)\frac{\partial y}{\partial \eta}(s, \tau,\eta)+\frac{\partial f}{\partial u}(s,y(s,\tau,\eta))\frac{\partial y}{\partial \eta}(s,\tau,\eta)\\
    \\&=&\frac{\partial}{\partial \eta}\frac{\partial y}{\partial s}(s,\tau,\eta),
\end{eqnarray*}
where the second equality is verified using Lebesgue's dominated convergence Theorem.
\end{proof}

\begin{corollary}\label{231}
If conditions \textbf{(c1)-(c5)} hold, with $r=2$ on \textbf{(c4)}, then $t\mapsto z(t,\tau,\eta):=\frac{\partial y}{\partial \eta}(t,\tau,\eta)$ is solution to the matrix initial value problem:
\begin{equation*}\label{230}
    \left\{ \begin{array}{ccl}
            z'(t) & = & \left[A(t)+\dfrac{\partial f}{\partial u}(t,y(t,\tau,\eta)\right]z(t)\\
            \\z(\tau) & = & I.
\end{array}
\right.\end{equation*}
\end{corollary}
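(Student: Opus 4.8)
The plan is to read the statement off directly from Lemma~\ref{232} together with the integral equation defining $y$. First I would record the initial condition: since $y(\tau,\tau,\eta)=\eta$ for every $\eta\in\mathbb{R}^d$, differentiating in $\eta$ yields $z(\tau,\tau,\eta)=\frac{\partial y}{\partial\eta}(\tau,\tau,\eta)=I$. That $z$ is well defined and of class $C^1$ in the $t$-variable is guaranteed by the classical smooth-dependence results (Theorem~4.1 in \cite{Hartman}) under \textbf{(c1)}--\textbf{(c4)} with $r=2$, which ensure that $\eta\mapsto y(t,\tau,\eta)$ is $C^2$ with jointly continuous partial derivatives that are themselves differentiable in $t$; in particular $z'(t)$ makes sense.

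Next I would compute $z'(t)$. By Lemma~\ref{232} we may interchange the $t$- and $\eta$-derivatives, so $z'(t)=\frac{\partial}{\partial t}\frac{\partial y}{\partial\eta}(t,\tau,\eta)=\frac{\partial}{\partial\eta}\frac{\partial y}{\partial t}(t,\tau,\eta)$. Since $y$ solves \eqref{90}, we have $\frac{\partial y}{\partial t}(t,\tau,\eta)=A(t)y(t,\tau,\eta)+f(t,y(t,\tau,\eta))$, and applying the chain rule in $\eta$ gives $\frac{\partial}{\partial\eta}\frac{\partial y}{\partial t}(t,\tau,\eta)=A(t)\frac{\partial y}{\partial\eta}(t,\tau,\eta)+\frac{\partial f}{\partial u}(t,y(t,\tau,\eta))\frac{\partial y}{\partial\eta}(t,\tau,\eta)=\bigl[A(t)+\frac{\partial f}{\partial u}(t,y(t,\tau,\eta))\bigr]z(t)$, which is precisely the asserted matrix ODE.

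There is essentially no obstacle here: the only delicate point --- the legitimacy of exchanging the order of differentiation and of differentiating under the integral sign when forming $\frac{\partial y}{\partial\eta}$ --- was already settled in the proof of Lemma~\ref{232} via a dominated-convergence argument, and the existence and continuity of the relevant partials is exactly the content of the cited classical theorem. Hence the corollary is immediate. As an aside, one could additionally invoke uniqueness for this linear initial value problem to identify $z$ with the fundamental matrix of the variational equation along the orbit $y(\cdot,\tau,\eta)$; this is the representation that makes the hypothesis \eqref{222} on $\frac{\partial^2 y}{\partial\eta^2}$ in Lemma~\ref{224} verifiable for concrete dichotomies.
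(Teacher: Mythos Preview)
Your argument is correct and is exactly the approach the paper intends: the corollary is stated without proof immediately after Lemma~\ref{232}, and the computation in that lemma's proof already exhibits $\frac{\partial}{\partial s}\frac{\partial y}{\partial\eta}=A(s)\frac{\partial y}{\partial\eta}+\frac{\partial f}{\partial u}(s,y)\frac{\partial y}{\partial\eta}$, while the initial condition $z(\tau)=I$ follows from $y(\tau,\tau,\eta)=\eta$ just as you wrote.
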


\begin{lemma}\label{243} Suppose conditions \textbf{(c1)-(c5)} hold, with $r=2$ on \textbf{(c4)}. If $s\mapsto \mathfrak{V}(s)$ satisfies (\ref{221}), then for every $s\geq \tau$
$$\norm{\frac{\partial^2 y}{\partial \eta^2}(s,\tau,\eta)}\leq \exp\left(\int_\tau^s\norm{A(p)}+\mathfrak{v}(p)dp\right)\cdot \int_\tau^s \left\{ \mathfrak{V}(p)\exp \left(2 \int_\tau^p \norm{A(r)}+\mathfrak{v}(r) dr\right) \right\}dp. $$
\end{lemma}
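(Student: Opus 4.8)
The plan is to differentiate the first variational equation for $\partial y/\partial\eta$ once more with respect to $\eta$, obtaining a linear inhomogeneous equation for $\partial^2 y/\partial\eta^2$ with vanishing data at $s=\tau$, and then to close the estimate by a Gronwall argument whose forcing term is controlled by $\mathfrak{V}$ together with the bound on $\partial y/\partial\eta$ already produced inside the proof of Lemma \ref{168}. The resulting bound in fact furnishes an explicit admissible choice of the function $\pi_\tau$ required in hypothesis (\ref{222}) of Lemma \ref{224}.

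\emph{Step 1: a variational equation for the second derivative.} By Corollary \ref{231}, $z(s):=\frac{\partial y}{\partial\eta}(s,\tau,\eta)$ solves $z'=\big[A(s)+\frac{\partial f}{\partial u}(s,y(s,\tau,\eta))\big]z$ with $z(\tau)=I$, hence in integrated form $z(s)=I+\int_\tau^s\big[A(p)+\frac{\partial f}{\partial u}(p,y(p,\tau,\eta))\big]z(p)\,dp$. Since $r=2$ in \textbf{(c4)}, the map $\eta\mapsto y(\cdot,\tau,\eta)$ is $C^2$ (Theorem 4.1 in \cite{Hartman}); differentiating the last identity once more in $\eta$ under the integral sign --- justified exactly as in Lemma \ref{232}, by Lebesgue's dominated convergence using the continuity and local boundedness of $\frac{\partial^2 f}{\partial u^2}$ and of $z$ on compact $s$-intervals, and using $\frac{\partial y}{\partial\eta}=z$ --- shows that $W(s):=\frac{\partial^2 y}{\partial\eta^2}(s,\tau,\eta)$ satisfies
\[
W(s)=\int_\tau^s\left\{\Big[A(p)+\tfrac{\partial f}{\partial u}(p,y(p,\tau,\eta))\Big]W(p)+\tfrac{\partial^2 f}{\partial u^2}(p,y(p,\tau,\eta))\Big(\tfrac{\partial y}{\partial\eta}(p,\tau,\eta)\Big)^2\right\}dp,
\]
in particular $W(\tau)=0$.

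\emph{Step 2: bounds and Gronwall.} Inside the proof of Lemma \ref{168} it was shown that $\norm{z(p)}=\norm{\frac{\partial y}{\partial\eta}(p,\tau,\eta)}\leq\exp\!\left(\int_\tau^p\norm{A(r)}+\mathfrak{v}(r)\,dr\right)$ for $p\geq\tau$, and also that $\norm{\frac{\partial f}{\partial u}(p,\cdot)}\leq\mathfrak{v}(p)$. Combining this with (\ref{221}) gives, for $p\geq\tau$,
\[
\norm{\tfrac{\partial^2 f}{\partial u^2}(p,y(p,\tau,\eta))\Big(\tfrac{\partial y}{\partial\eta}(p,\tau,\eta)\Big)^2}\leq\mathfrak{V}(p)\,\norm{z(p)}^2\leq\mathfrak{V}(p)\exp\!\left(2\int_\tau^p\norm{A(r)}+\mathfrak{v}(r)\,dr\right),
\]
while $\norm{A(p)+\frac{\partial f}{\partial u}(p,y(p,\tau,\eta))}\leq\norm{A(p)}+\mathfrak{v}(p)$. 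Writing $G(s):=\int_\tau^s\mathfrak{V}(p)\exp\!\left(2\int_\tau^p\norm{A(r)}+\mathfrak{v}(r)\,dr\right)dp$, which is nondecreasing, Step 1 yields $\norm{W(s)}\leq\int_\tau^s\big(\norm{A(p)}+\mathfrak{v}(p)\big)\norm{W(p)}\,dp+G(s)$ for $s\geq\tau$, and the integral form of Gronwall's inequality with nondecreasing majorant $G$ gives
\[
\norm{W(s)}\leq G(s)\exp\!\left(\int_\tau^s\norm{A(p)}+\mathfrak{v}(p)\,dp\right)=\exp\!\left(\int_\tau^s\norm{A(p)}+\mathfrak{v}(p)\,dp\right)\int_\tau^s\mathfrak{V}(p)\exp\!\left(2\int_\tau^p\norm{A(r)}+\mathfrak{v}(r)\,dr\right)dp,
\]
which is the claimed bound.

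\emph{Main obstacle.} The only genuinely delicate point is Step 1: legitimately differentiating the variational integral equation a second time in $\eta$ (equivalently, commuting $\partial_s$ with $\partial_\eta$ on $z$, as was done at first order in Lemma \ref{232}). This is handled by the same dominated-convergence argument; \textbf{(c4)} with $r=2$ is precisely what guarantees that $\frac{\partial^2 f}{\partial u^2}$ has the continuity needed to pass the limit inside the integral, and the already established growth bound for $\frac{\partial y}{\partial\eta}$ supplies the integrable dominating function on each compact $s$-interval. Everything after that is the routine Gronwall estimate above.
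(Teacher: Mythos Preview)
Your argument is correct and reaches the same bound, but the route differs from the paper's. You differentiate the first variational equation once more to obtain an explicit linear inhomogeneous equation for $W=\partial^2 y/\partial\eta^2$, then bound the forcing term via $\mathfrak{V}$ and the known estimate for $\partial y/\partial\eta$, and close with Gronwall. The paper instead works with finite differences: it fixes $\eta,\tilde\eta$, writes the difference $z(s,\tau,\eta)-z(s,\tau,\tilde\eta)$ of first derivatives, bounds its growth using the Lipschitz estimate for $\partial f/\partial u$ coming from (\ref{221}) together with the Lipschitz bound for $y$ in $\eta$, applies Gronwall to the quotient $\phi(s)=\norm{z(s,\tau,\eta)-z(s,\tau,\tilde\eta)}/|\eta-\tilde\eta|$, and only at the very end passes to the limit $\tilde\eta\to\eta$ to read off the bound for $\norm{\partial^2 y/\partial\eta^2}$. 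The practical difference is exactly the point you flag as the ``main obstacle'': your approach requires justifying the second differentiation under the integral (the $r=2$ analogue of Lemma~\ref{232}), whereas the paper's difference-quotient approach sidesteps this entirely, needing only the Lipschitz consequence of (\ref{221}) rather than the pointwise second derivative. Your route is cleaner once the variational equation is in hand; the paper's is slightly more elementary in its hypotheses on how the second derivative is accessed.
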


\begin{proof}
Let $\eta,\tilde{\eta}\in \mathbb{R}^n$ and fix $\tau\in \mathbb{R}^+$. Denote $s\mapsto z(s,\tau,\eta):=\frac{\partial y}{\partial \eta}(s,\tau,\eta)$ and $s\mapsto z(s,\tau,\tilde{\eta}):=\frac{\partial y}{\partial \eta}(s,\tau,\tilde{\eta})$. By Corollary \ref{231} we know
$$z'(s,\tau,\eta)=\left[A(s)+\frac{\partial f}{\partial u}(s,y(s,\tau,\eta)) \right]z(s,\tau,\eta),$$
hence
\begin{eqnarray*}
    z'(s,\tau,\eta)-z'(s,\tau,\tilde{\eta})&=&A(s)\left( z(s,\tau,\eta)-z(s,\tau,\tilde{\eta})\right)+\frac{\partial f}{\partial u}(s,y(s,\tau,\eta))z(s,\tau,\eta)\\
    \\& &-\frac{\partial f}{\partial u}(s,y(s,\tau,\tilde{\eta}))z(s,\tau,\tilde{\eta})\\
    \\&=&\left[ A(s)+\frac{\partial f}{\partial u}(s,y(s,\tau,\eta))\right]\left( z(s,\tau,\eta)-z(s,\tau,\tilde{\eta})\right)\\
    \\& &+\left[\frac{\partial f}{\partial u}(s,y(s,\tau,\eta))-\frac{\partial f}{\partial u}(s,y(s,\tau,\tilde{\eta}))\right]z(s,\tau,\tilde{\eta}).
\end{eqnarray*}

Thus
\begin{eqnarray*}
    \norm{z'(s,\tau,\eta)-z'(s,\tau,\tilde{\eta})}&\leq& \norm{A(s)+\frac{\partial f}{\partial u}(s,y(s,\tau,\eta)}\norm{z(s,\tau,\eta)-z(s,\tau,\tilde{\eta})}\\
    \\& &+\norm{\frac{\partial f}{\partial u}(s,y(s,\tau,\eta))-\frac{\partial f}{\partial u}(s,y(s,\tau,\tilde{\eta}))}\norm{z(s,\tau,\tilde{\eta})}\\
    &\leq&\left( \norm{A(s)}+\mathfrak{v}(s)\right) \norm{ z(s,\tau,\eta)-z(s,\tau,\tilde{\eta})} \\
    \\& &+\mathfrak{V}(s)\left|y(s,\tau,\eta)-y(s,\tau,\tilde{\eta})\right|\norm{z(s,\tau,\tilde{\eta})}\\
    \\&\leq&\left( \norm{A(s)}+\mathfrak{v}(s)\right) \norm{ z(s,\tau,\eta)-z(s,\tau,\tilde{\eta})} \\
    \\& &+\mathfrak{V}(s)\exp \left(2 \int_\tau^s \norm{A(r)}+\mathfrak{v}(r) dr \right)|\eta-\tilde{\eta}|,
\end{eqnarray*}
or equivalently, for $p\in [\tau,s]$
\begin{eqnarray*}
\frac{\norm{z'(p,\tau,\eta)-z'(p,\tau,\tilde{\eta})}}{|\eta-\tilde{\eta}|}&\leq& \left( \norm{A(p)}+\mathfrak{v}(p)\right)\frac{\norm{z(p,\tau,\eta)-z(p,\tau,\tilde{\eta})}}{|\eta-\tilde{\eta}|}+\mathfrak{V}(p)\exp \left(2 \int_\tau^p \norm{A(r)}+\mathfrak{v}(r) dr \right).
\end{eqnarray*}

On the other hand
\begin{eqnarray*}
    z(s,\tau,\eta)- z(s,\tau,\tilde{\eta})&=&z(\tau,\tau,\eta)+\int_\tau^s z'(p,\tau,\eta)dp-z(\tau,\tau,\tilde{\eta})-\int_\tau^s z'(p,\tau,\tilde{\eta})dp\\
    \\&=&\int_\tau^s z'(p,\tau,\eta)-z'(p,\tau,\tilde{\eta})dp,
\end{eqnarray*}
hence
$$\phi(s):=\frac{\norm{z(s,\tau,\eta)-z(s,\tau,\tilde{\eta})}}{|\eta-\tilde{\eta}|}\leq \int_\tau^s \frac{\norm{z'(p,\tau,\eta)-z'(p,\tau,\tilde{\eta})}}{|\eta-\tilde{\eta}|}dp,$$
thus, we can deduce
\begin{eqnarray*}
\phi(s)&\leq&\int_{\tau}^s\left\{ \left( \norm{A(p)}+\mathfrak{v}(p)\right)\phi(p)+\mathfrak{V}(p)\exp \left(2 \int_\tau^p \norm{A(r)}+\mathfrak{v}(r) dr \right)\right\}dp,
\end{eqnarray*}
or alternatively, writing for  $s\geq \tau$
$$\alpha(s)=\norm{A(s)}+\mathfrak{v}(s)$$ 
and
$$\beta(s)=\int_\tau^s\mathfrak{V}(p)\exp \left(2 \int_\tau^p \norm{A(r)}+\mathfrak{v}(r) dr\right)dp,$$
we have
$$\phi(s)\leq \beta(s)+ \int_\tau^s  \alpha(p)\phi(p)dp$$
by using Gronwall's Lemma we have
\begin{eqnarray*}
    \phi(s)&\leq& \beta(s)\exp\left(\int_\tau^s\alpha(p)dp\right)\\
    &=& \int_\tau^s \left\{ \mathfrak{V}(p)\exp \left(2 \int_\tau^p \norm{A(r)}+\mathfrak{v}(r) dr\right) \right\}dp\cdot \exp\left(\int_\tau^s\norm{A(p)}+\mathfrak{v}(p)dp\right),
    \end{eqnarray*}
which by definition of $\norm{\frac{\partial^2 y}{\partial \eta^2}(s,\tau,\eta)}$, concludes the proof.
\end{proof}

\begin{corollary} \label{245}
Suppose conditions \textbf{(c1)-(c5)} hold, with $r=2$ on \textbf{(c4)}. If $\mathfrak{v}(s)=\nu e^{-\varepsilon_1 s}$ and $\mathfrak{V}(s)=\zeta e^{-\varepsilon_2 s}$ satisfies (\ref{221}), then for $s\geq \tau$
$$\norm{\frac{\partial^2 y}{\partial \eta^2}(s,\tau,\eta)}\leq \frac{\zeta e^{-3\tau (M+\nu)}}{2(M+\nu)-\varepsilon_2}\left[ e^{(3(M+\nu)-\varepsilon_2)s}-e^{(2(M+\nu)-\varepsilon_2)\tau+(M+\nu)s}\right]. $$
\end{corollary}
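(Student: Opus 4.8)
The plan is to specialize the general second-derivative estimate of Lemma \ref{243} to the prescribed decay functions $\mathfrak{v}(s)=\nu e^{-\varepsilon_1 s}$ and $\mathfrak{V}(s)=\zeta e^{-\varepsilon_2 s}$. First I would invoke Lemma \ref{243}: since conditions \textbf{(c1)-(c5)} hold with $r=2$ on \textbf{(c4)} and $\mathfrak{V}$ satisfies (\ref{221}), we have for every $s\geq\tau$
$$\norm{\frac{\partial^2 y}{\partial \eta^2}(s,\tau,\eta)}\leq \exp\left(\int_\tau^s\norm{A(p)}+\mathfrak{v}(p)dp\right)\cdot \int_\tau^s \mathfrak{V}(p)\exp \left(2 \int_\tau^p \norm{A(r)}+\mathfrak{v}(r) dr\right) dp.$$
Then I would use the uniform bound $\norm{A(r)}\leq M$ together with the trivial estimate $\mathfrak{v}(r)=\nu e^{-\varepsilon_1 r}\leq \nu$ for all $r\geq 0$, so that $\int_\tau^s \norm{A(p)}+\mathfrak{v}(p)dp\leq (M+\nu)(s-\tau)$ and $\int_\tau^p \norm{A(r)}+\mathfrak{v}(r)dr\leq (M+\nu)(p-\tau)$. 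Substituting these and $\mathfrak{V}(p)=\zeta e^{-\varepsilon_2 p}$ into the right-hand side reduces the whole estimate to the single elementary integral $\int_\tau^s e^{(2(M+\nu)-\varepsilon_2)p}dp$ multiplied by the constants $\zeta e^{-2(M+\nu)\tau}$ and the factor $e^{(M+\nu)(s-\tau)}$.

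Next I would evaluate $\int_\tau^s e^{(2(M+\nu)-\varepsilon_2)p}dp=\dfrac{e^{(2(M+\nu)-\varepsilon_2)s}-e^{(2(M+\nu)-\varepsilon_2)\tau}}{2(M+\nu)-\varepsilon_2}$, assuming $2(M+\nu)\neq\varepsilon_2$. Collecting the prefactors via $e^{(M+\nu)(s-\tau)}e^{-2(M+\nu)\tau}=e^{(M+\nu)s-3(M+\nu)\tau}$ and distributing this over the bracketed difference, the first term gives exponent $(M+\nu)s-3(M+\nu)\tau+(2(M+\nu)-\varepsilon_2)s=(3(M+\nu)-\varepsilon_2)s-3(M+\nu)\tau$ and the second gives exponent $(M+\nu)s-3(M+\nu)\tau+(2(M+\nu)-\varepsilon_2)\tau=(M+\nu)s+(2(M+\nu)-\varepsilon_2)\tau-3(M+\nu)\tau$, so after factoring $e^{-3(M+\nu)\tau}$ one obtains exactly
$$\norm{\frac{\partial^2 y}{\partial \eta^2}(s,\tau,\eta)}\leq\frac{\zeta e^{-3\tau (M+\nu)}}{2(M+\nu)-\varepsilon_2}\left[ e^{(3(M+\nu)-\varepsilon_2)s}-e^{(2(M+\nu)-\varepsilon_2)\tau+(M+\nu)s}\right],$$
which is the asserted inequality.

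There is no serious obstacle here; the computation is routine. The only points deserving a moment of care are the bookkeeping of exponents when the prefactor is distributed across the two terms of the bracket, and the observation that the resulting expression is a genuine nonnegative upper bound regardless of the sign of $2(M+\nu)-\varepsilon_2$, because the bracketed difference and the denominator always carry the same sign (their ratio vanishes at $s=\tau$, consistent with $\partial y/\partial\eta(\tau,\tau,\eta)=I$). I would present the argument for $2(M+\nu)\neq\varepsilon_2$ and simply remark that in the borderline case $2(M+\nu)=\varepsilon_2$ the elementary integral equals $s-\tau$, so the bound takes the slightly different form $\zeta(s-\tau)e^{(M+\nu)s-3(M+\nu)\tau}$.
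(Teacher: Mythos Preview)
Your proof is correct and follows exactly the same route as the paper: invoke Lemma \ref{243}, bound $\norm{A(r)}+\mathfrak{v}(r)\leq M+\nu$, insert $\mathfrak{V}(p)=\zeta e^{-\varepsilon_2 p}$, evaluate the resulting elementary integral, and rearrange the exponents. Your write-up is in fact slightly more careful than the paper's, since you explicitly address the sign of $2(M+\nu)-\varepsilon_2$ and the borderline case $2(M+\nu)=\varepsilon_2$, which the paper leaves implicit.
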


\begin{proof} 
By Lemma \ref{243} we have
    \begin{eqnarray*}
    \norm{\frac{\partial^2 y}{\partial \eta^2}(s,\tau,\eta)}&\leq& \exp\left(\int_\tau^s\norm{A(p)}+\mathfrak{v}(p)dp\right)\cdot \int_\tau^s \left\{ \mathfrak{V}(p)\exp \left(2 \int_\tau^p \norm{A(r)}+\mathfrak{v}(r) dr\right) \right\}dp\\
    &\leq&  e^{(s-\tau)(M+\nu)}\int_\tau^s \left\{\zeta e^{-\varepsilon_2 p}e^{2(p-\tau)(M+\nu)}\right\}dp\\
    &=&\zeta e^{(s-3\tau )(M+\nu)} \frac{e^{(2(M+\nu)-\varepsilon_2)s}-e^{(2(M+\nu)-\varepsilon_2)\tau}}{2(M+\nu)-\varepsilon_2}\\
    &=&\frac{\zeta e^{-3\tau (M+\nu)}}{2(M+\nu)-\varepsilon_2}\left[ e^{(3(M+\nu)-\varepsilon_2)s}-e^{(2(M+\nu)-\varepsilon_2)\tau+(M+\nu)s}\right]
\end{eqnarray*}

\end{proof}

\begin{theorem}
Suppose (\ref{89}) admits a nonuniform exponential dichotomy, \textit{i.e. }there are two complementary invariant projectors $P(\cdot)$, $Q(\cdot)$ and constants  $C,\lambda, \varepsilon_1>0$ such that
$$\left\{ \begin{array}{lc}
             \norm{X(t,s)P(s)}\leq C
            e^{-\lambda(t-s)+\varepsilon_1 s}, &\forall t \geq s\geq 0 \\
            \\ \norm{X(t,s)Q(s)}\leq C
            e^{\lambda(t-s)+\varepsilon_1 s}, &\forall 0\leq t\leq s.
             \end{array}
   \right.$$
   
Furthermore, suppose that for each $t\in \mathbb{R}^+$ $u\mapsto f(t,u)$ is a $C^2$ map satisfying
\begin{equation*}
    |f(s,u)|\leq \kappa e^{-\varepsilon_0 s},
\end{equation*}
\begin{equation*}
    \norm{\frac{\partial f}{\partial u}(s,u)}\leq \nu e^{-\varepsilon_1 s}
\end{equation*}
and
\begin{equation}\label{229}
    \norm{\frac{\partial f}{\partial u}(s,u)-\frac{\partial f}{\partial u}(s,\tilde{u})}\leq \zeta e^{-\varepsilon_2 s}|u-\tilde{u}|,
\end{equation}
for given $\kappa,\nu ,\zeta>0$ and $\varepsilon_0 > \varepsilon_1 - \lambda$. If $3M<\lambda+\varepsilon_2$, $2M<\lambda+\varepsilon_2-\varepsilon_1$ and $M<\lambda$, then for small enough $\nu>0$ the systems (\ref{89}) and (\ref{90}) are $C^2$-topologically equivalent on $\mathbb{R}^+.$
\end{theorem}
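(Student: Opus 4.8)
The plan is to reduce the statement to Theorem \ref{235}: we verify \textbf{(c1)}--\textbf{(c5)} with $r=2$ in \textbf{(c4)}, together with the extra hypotheses \eqref{221}, \eqref{222} and \eqref{223} of Lemma \ref{224}, and then Theorem \ref{235} applies verbatim. As already noted in the proof of Corollary \ref{228}, the nonuniform exponential dichotomy is exactly \textbf{(c1)} with $K(s)=Ce^{\varepsilon_1 s}$ and $h(s)=e^{-\lambda s}$; the assumed decay bounds on $f$ and $\partial f/\partial u$ give the standing hypotheses on the nonlinearity with $\mathfrak{u}(s)=\kappa e^{-\varepsilon_0 s}$ and $\mathfrak{v}(s)=\nu e^{-\varepsilon_1 s}$; \textbf{(c4)} with $r=2$ holds because $u\mapsto f(t,u)$ is $C^2$ with $\sup_u\norm{\frac{\partial f}{\partial u}(t,u)}\le\nu$; and \textbf{(c2)} (via $\varepsilon_0>\varepsilon_1-\lambda$), \textbf{(c3)} and \textbf{(c5)} (via $M<\lambda$, for $\nu$ small, using $\exp(\int_\tau^s\norm{A(r)}+\mathfrak{v}(r)\,dr)\le e^{(M+\nu)(s-\tau)}$) are obtained by the very same computations performed in Corollary \ref{228}.

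It remains to produce the two auxiliary functions of Lemma \ref{224}. The Lipschitz estimate \eqref{229} on $u\mapsto\frac{\partial f}{\partial u}(s,u)$ passes to the second derivative exactly as the bound on $\frac{\partial f}{\partial u}$ is obtained from $\mathfrak{v}$ in Lemma \ref{168}, so $\norm{\frac{\partial^2 f}{\partial u^2}(s,u)}\le\zeta e^{-\varepsilon_2 s}=:\mathfrak{V}(s)$, which is \eqref{221}. Having \textbf{(c1)}--\textbf{(c5)} in force with $\mathfrak{v},\mathfrak{V}$ as above, Corollary \ref{245} supplies a function satisfying \eqref{222}, namely
$$\pi_\tau(s)=\frac{\zeta e^{-3\tau(M+\nu)}}{2(M+\nu)-\varepsilon_2}\left[e^{(3(M+\nu)-\varepsilon_2)s}-e^{(2(M+\nu)-\varepsilon_2)\tau+(M+\nu)s}\right]$$
(with the obvious modification, a linear factor times an exponential, when $2(M+\nu)=\varepsilon_2$); in all cases $\pi_\tau(s)\le C_\tau\left(e^{(3(M+\nu)-\varepsilon_2)s}+e^{(M+\nu)s}\right)$ for a constant $C_\tau$ depending only on $\tau$.

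The crux is the integrability condition \eqref{223}. With $K(s)h(s)=Ce^{(\varepsilon_1-\lambda)s}$, the estimate $\exp(\int_\tau^s\norm{A(r)}+\mathfrak{v}(r)\,dr)\le e^{(M+\nu)(s-\tau)}$, and the bounds just recorded for $\pi_\tau$, $\mathfrak{v}$ and $\mathfrak{V}$, the integrand in \eqref{223} is dominated, up to a $\tau$-dependent multiplicative constant, by a finite sum of pure exponentials $e^{\gamma_i s}$ whose exponents are
$$\gamma_1=3(M+\nu)-\varepsilon_2-\lambda,\qquad \gamma_2=(M+\nu)-\lambda,\qquad \gamma_3=2(M+\nu)+\varepsilon_1-\varepsilon_2-\lambda,$$
the first two arising from the term $\pi_\tau(s)\mathfrak{v}(s)$ and the last from the term $\mathfrak{V}(s)\big[\exp(\cdots)\big]^2$. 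Because $3M<\lambda+\varepsilon_2$, $M<\lambda$ and $2M<\lambda+\varepsilon_2-\varepsilon_1$, each $\gamma_i$ becomes strictly negative once $\nu>0$ is small enough, so \eqref{223} holds for every fixed $\tau\in\mathbb{R}^+$. All hypotheses of Theorem \ref{235} being met, that theorem gives that \eqref{89} and \eqref{90} are $C^2$-topologically equivalent on $\mathbb{R}^+$.

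I expect the only real difficulty to be this exponent bookkeeping in \eqref{223}: the three inequalities among $M,\lambda,\varepsilon_1,\varepsilon_2$ are precisely what guarantee that the dominant exponents stay negative after the growth estimate forces the replacement of $M$ by $M+\nu$, which is why $\nu$ must be taken small; the degenerate case $2(M+\nu)=\varepsilon_2$ in Corollary \ref{245} is harmless, since there $\pi_\tau$ is only modified by a linear factor, not affecting the sign of any $\gamma_i$.
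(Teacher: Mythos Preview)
Your proposal is correct and follows essentially the same route as the paper: you invoke Corollary~\ref{228} to get \textbf{(c1)}--\textbf{(c5)}, read off $\mathfrak{V}(s)=\zeta e^{-\varepsilon_2 s}$ from \eqref{229}, take $\pi_\tau$ from Corollary~\ref{245}, and then reduce \eqref{223} to the negativity of the three exponents $\gamma_1,\gamma_2,\gamma_3$, which is exactly the computation the paper carries out before applying Theorem~\ref{235}. Your explicit treatment of the borderline case $2(M+\nu)=\varepsilon_2$ is a small addition the paper leaves implicit.
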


\begin{proof} By Corollary \ref{228} we know conditions \textbf{(c1)-(c5)} are verified, this the systems are $C^1$-topologically equivalent on $\mathbb{R}^+$, with $\mathfrak{v}(s)=\nu e^{-\varepsilon_1 s}$. It is easy to see (\ref{229}) implies (\ref{221}), with $\mathfrak{V}(s)=\zeta e^{-\varepsilon_2 s}$. As in Corollary \ref{228}, denote
$$\Psi_\tau(s)=\exp\left(\int_\tau^s \norm{A(r)}+\mathfrak{v}(r) dr \right),$$
and note that $s\geq \tau \geq t$ implies $\Psi_t(s)\leq \Psi_\tau(s)$, thus for every fixed   $\tau\in \mathbb{R}^+$
$$\Psi_\tau(s)^2\leq \Psi_0(s)^2\leq e^{2(M+\nu)s}.$$

Hence
\begin{eqnarray*}
     \int_{\tau}^{\infty}K(s)h(s)\mathfrak{V}(s)\Psi_\tau(s)^2 ds &\leq& \int_{0}^\infty K(s)h(s)\mathfrak{V}(s)\Psi_0(s)^2 ds\\
    \\
    &\leq&C\zeta \int_{0}^\infty e^{[-\lambda-\varepsilon_2+\varepsilon_1+2(M+\nu)]s}ds.
\end{eqnarray*}

As $2M<\lambda+\varepsilon_2-\varepsilon_1$, then for a small enough $\nu$ we have
 \begin{equation}\label{233}
    \int_{\tau}^{\infty}K(s)h(s)\mathfrak{V}(s)\left[ \exp\left( \int_\tau^s \norm{A(r)}+\mathfrak{v}(r) dr\right)\right]^2 ds<+\infty.
    \end{equation}

Now, by Corollary \ref{245}, we know
$$\pi_\tau(s):=\frac{\zeta e^{-3\tau (M+\nu)}}{2(M+\nu)-\varepsilon_2}\left[ e^{(3(M+\nu)-\varepsilon_2)s}-e^{(2(M+\nu)-\varepsilon_2)\tau+(M+\nu)s}\right],$$
satisfies condition (\ref{222}) from Lemma \ref{224}. Note that
\begin{eqnarray*}
    \int_{\tau}^{\infty}K(s)h(s)\pi_\tau(s)\mathfrak{v}(s)ds&=&\int_\tau^\infty \mathfrak{K}_\tau \left[ e^{(3(M+\nu)-\varepsilon_2-\lambda)s}-\frac{e^{(M+\nu-\lambda)s}}{e^{(\varepsilon_2-2(M+\nu))\tau}}\right]ds<+\infty,
\end{eqnarray*}
for a small enough $\nu$, where $\mathfrak{K}_\tau=\frac{C\zeta\nu  e^{-3\tau (M+\nu)}}{2(M+\nu)-\varepsilon_2}$. The previous argument, along with (\ref{233}) imply that conditions (\ref{223}) from Lemma \ref{224} is satisfied. Finally, applying Theorem \ref{235} the result follows.
\end{proof}

\begin{remark}
In the previous result, if $\varepsilon_0=\varepsilon_1=\varepsilon_2:=\varepsilon$, then the conditions $\varepsilon_0 > \varepsilon_1 - \lambda$, $3M<\lambda+\varepsilon_2$, $2M<\lambda+\varepsilon_2-\varepsilon_1$ and $M<\lambda$, may be reduced to $2M<\lambda$ and $3M<\lambda+\varepsilon$.
\end{remark}

\end{document}